\theoremstyle{thmstyleone}%
\theoremstyle{thmstyletwo}%
\theoremstyle{thmstylethree}%
\theoremstyle{definition}
\newtheorem{Exm}[subsection]{Example}
\newtheorem{Rem}[subsection]{Remark}
\newtheorem{Thm}[subsection]{Theorem}
\newtheorem{Lem}[subsection]{Lemma}
\DeclareMathOperator{\Z}{\mathbb{Z}}
\DeclareMathOperator{\rk}{\text{\rm rank}}
\DeclareMathOperator{\ch}{\text{\rm ch}}
\DeclareMathOperator{\Q}{\mathbb{Q}}
\DeclareMathOperator{\tens}{\otimes}
\DeclareMathOperator{\Ker}{\text{\rm ker}}
\begin{document}

\title[Article Title]{$C$-triviality of manifolds of low dimensions}


\author[1]{\fnm{Shubham} \sur{Sharma}}\email{sharm340@msu.edu}

\author[2]{\fnm{Animesh} \sur{Renanse}}\email{arenanse@ucsc.edu}


\affil[1]{\orgdiv{Department of Mathematics}, \orgname{Michigan State University}, \orgaddress{\street{220 Trowbridge Rd}, \city{East Lansing}, \postcode{48824}, \state{Michigan}, \country{USA}}}

\affil[2]{\orgdiv{Department of Mathematics}, \orgname{University of California, Santa Cruz}, \orgaddress{\street{1156 High St}, \city{Santa Cruz}, \postcode{95064}, \state{California}, \country{USA}}}



\abstract{A space $X$ is said to be $C$-trivial if the total Chern class $c(\alpha)$ equals $1$ for every complex vector bundle $\alpha$ over $X$. In this note we give a complete homological classification of $C$-trivial closed connected smooth manifolds of dimension $\le 7$. Our main tool is the Atiyah-Hirzebruch spectral sequence and orders of its differentials.}

\keywords{Chern classes, Stiefel-Whitney classes, Homology groups, Atiyah-Hirzebruch spectral sequence, $C$-triviality.}



\maketitle

\section{Introduction}

A space $X$ is said to be $C$-trivial if the total Chern class $c(\alpha)$ equals $1$ for every complex vector bundle $\alpha$ over $X$. 
There are analogous definitions of a $W$-trivial (respectively, $P$-trivial) space, to describe spaces where the total Stiefel-Whitney class $w(\alpha)$ (respectively the total Pontrjagin class $p(\alpha)$) equals $1$ for every real vector bundle $\alpha$ over $X$. 

Given a space $X$, it is an interesting question to understand whether or not $X$ is $W$-trivial, $P$-trivial or $C$-trivial. 
In recent times several authors have investigated this question.  We refer the reader to \cite{bhatta}, \cite{tanaka1}, \cite{tanaka}, 
\cite{ajaydold}, \cite{chern}, \cite{podder} and the references therein. 

One of the first theorems in this direction is the theorem of Atiyah-Hirzebruch. 

\begin{Thm} \cite[Theorem 2, page 223]{atiyah} For a finite $CW$-complex $X$, the $9$-fold suspension $\Sigma^9X$ is $W$-trivial. 
	\qed
\end{Thm}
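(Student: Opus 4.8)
The plan is to deduce the theorem from real $K$-theory, tracking the behaviour of the cohomology suspension along the Bott tower; the last step can equally be phrased through the $KO$-Atiyah--Hirzebruch spectral sequence.

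\textbf{Step 1: reduce to $\widetilde{KO}$ and to cohomology classes on $O$.} Stiefel--Whitney classes are stable, $w(\alpha\oplus\varepsilon^{1})=w(\alpha)$, so $w$ induces a natural, multiplicative map $w\colon\widetilde{KO}^{0}(Y)\to 1+\widetilde H^{>0}(Y;\Z/2)$ on finite complexes $Y$ (using that $\widetilde{KO}^{0}(Y)$ is the group completion of the monoid of real vector bundles on $Y$ and that the target is a group under cup product); hence $Y$ is $W$-trivial iff this map is trivial. Take $Y=\Sigma^{9}X$ with $X$ connected (the disconnected case reduces to the connected components together with $W$-triviality of spheres, which follows from the same vanishing below). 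A class $\alpha\in\widetilde{KO}^{0}(\Sigma^{9}X)=[\Sigma^{9}X,BO]_{*}$ is the same datum as its ninefold adjoint $X\to\Omega^{9}BO$, and Bott periodicity gives $\Omega^{9}BO\simeq O$. By adjunction, $w_{n}(\alpha)=0$ for every $n$ iff, for every $n$, the composite
\[
X\longrightarrow O\xrightarrow{\ \Omega^{9}w_{n}\ }\Omega^{9}K(\Z/2,n)=K(\Z/2,\,n-9)
\]
is null-homotopic. Since $\Omega^{9}w_{n}$ is classified by $\sigma^{9}w_{n}\in H^{\,n-9}(O;\Z/2)$, with $\sigma$ the cohomology suspension, and since conversely any nonzero such class would yield a bundle over some finite $\Sigma^{9}X'$ with $w_{n}\neq 0$, the theorem is equivalent to the purely algebraic statement: $\sigma^{9}w_{n}=0$ in $H^{\,n-9}(O;\Z/2)$ for all $n\geq 1$.

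\textbf{Step 2: the vanishing $\sigma^{9}w_{n}=0$.} This is a computation along the Bott tower
\[
BO\xrightarrow{\ \Omega\ }O\to O/U\to U/Sp\to \Z\times BSp\to Sp\to Sp/U\to U/O\to \Z\times BO\to O,
\]
following the action of $\sigma$ on the mod~$2$ cohomology of each term, using that $\sigma$ kills decomposables. The odd classes disappear at once: $H^{*}(O;\Z/2)$ has a simple system of generators $x_{i}$ ($i\geq 1$, $|x_{i}|=i$) with $x_{i}^{2}=x_{2i}$, and $\sigma w_{n}=x_{n-1}$, which is decomposable for $n$ odd, so $\sigma^{2}w_{n}=0$ there. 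In general one verifies that $\sigma^{k}w_{n}$ has become decomposable by the time $k=8$ --- one full Bott period, at which point one is back at $\Z\times BO$ --- so the ninth suspension annihilates it. Thus $9=8+1$ is exactly the length of the Bott period plus one further suspension to clear decomposables. Assembling these vanishings shows the displayed composite is null for every $n$, hence $w(\alpha)=1$ for every bundle $\alpha$ over $\Sigma^{9}X$.

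\textbf{Main obstacle.} The heart of the matter is Step~2: controlling the iterated suspensions $\sigma^{k}w_{n}$ through the Bott tower for $k$ up to $8$, i.e.\ showing $\sigma^{8}w_{n}$ is decomposable in $H^{*}(BO;\Z/2)$. This is where one genuinely uses Bott periodicity rather than merely that $\Sigma^{9}X$ is a suspension, and it is also what pins down the number nine: fewer suspensions leave low Stiefel--Whitney classes alive --- already $S^{1}=\Sigma^{1}S^{0}$ and $S^{2}=\Sigma^{2}S^{0}$ are not $W$-trivial --- while a more delicate example shows that eight suspensions do not suffice in general. This is the same mechanism that resurfaces, via the orders of the differentials in the $KO$-Atiyah--Hirzebruch spectral sequence, in the classification results of this paper.
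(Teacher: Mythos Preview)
The paper does not supply a proof of this statement: it is quoted from \cite{atiyah} and closed immediately after the enunciation, serving only as historical motivation for the questions taken up later. There is therefore no argument in the paper to compare yours against.

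On its own merits, your outline is essentially the original Atiyah--Hirzebruch strategy. Step~1 is correct: stability of $w$ lets one pass to $\widetilde{KO}^{0}$, the adjunction $[\Sigma^{9}X,BO]_{*}\cong[X,\Omega^{9}BO]_{*}$ together with real Bott periodicity $\Omega^{9}BO\simeq O$ is valid, and the reduction to the vanishing of $\sigma^{9}w_{n}\in H^{n-9}(O;\Z_2)$ is the right reformulation. Step~2, however, is only a sketch. You assert that by tracking $w_{n}$ down the Bott tower one finds $\sigma^{8}w_{n}$ decomposable, but you do not carry out this computation at any stage beyond the first, and you yourself flag it as the ``main obstacle''. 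The specific structural claim you make for $H^{*}(O;\Z_2)$ (a simple system with $x_{i}^{2}=x_{2i}$) and the analogous ring-structure facts needed at $O/U$, $U/Sp$, $BSp$, $Sp$, $Sp/U$, $U/O$ each require individual verification; none of this is done. As a plan the proposal is correct and faithful to \cite{atiyah}; as a proof it is incomplete at precisely the step where the genuine content lies.
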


The above theorem implies that for a finite $CW$-complex $X$, the suspension $\Sigma^kX$ is $W$-trivial whenever $k\geq 9$. 
Tanaka, in a series of papers (see \cite{tanaka}, \cite{tanaka1}, \cite{tanaka2}, \cite{tanaka3}), investigated the $W$-triviality of iterated suspensions of projective spaces 
(over $\mathbb R, \mathbb C$ and $\mathbb H$).  In \cite{anistunted}, the authors have determined conditions under which the iterated suspension 
$\Sigma^k(\mathbb RP^m/\mathbb RP^n)$ of the stunted  real projective spaces is $W$-trivial. The $W$-triviality of the iterated suspensions of the Dold manifolds has been determined  in the paper \cite{ajaydold}. 

Not much discussion is available in the literature about $C$-trivial and $P$-trivial spaces. We refer the reader to \cite{chern} for a discussion on $C$-triviality and very recently to \cite{podder} for a discussion of $P$-triviality. In \cite{chern} the authors completely determine which iterated suspensions 
$\Sigma^k(\mathbb FP^m/\mathbb FP^n)$ of the stunted projective spaces, where $\mathbb F= \mathbb R,\mathbb C,\mathbb H$, are $C$-trivial. 
In \cite{podder}, the authors obtain a complete description of when $\Sigma^k(\mathbb FP^m/\mathbb FP^n)$ is $P$-trivial when $\mathbb F=\mathbb R,\mathbb C$. 

A recent paper \cite{bhatta} discusses $W$-triviality of low dimensional manifolds. The authors obtain almost a complete description of closed smooth manifolds that are $W$-trivial in each dimension $n\leq 7$. For example, the authors prove in \cite[Theorem 1.4]{bhatta} that a closed orientable smooth manifold  $X$ of dimension $n=3,5$ is $W$-trivial if and only if $X$ is a $\mathbb Z_2$-homology sphere. Recall that for a commutative ring $R$ with $1$, a $R$-homology $n$-sphere is a closed connected smooth $n$-manifold $X$ such that 
$H_i(X;R)\cong H_i(S^n;R)$ for all $i\geq 0$.

In this note we try to understand which closed smooth manifolds are $C$-trivial. The Bott integrality theorem places an obstruction to the 
$C$-triviality of a closed orientable smooth $n$-manifold if $n$ is even. Indeed, by the Bott integrality theorem, if $\alpha$ is a complex vector bundle over $S^{2n}$, then $c_n(\alpha)$ is divisible by $(n-1)!$ and conversely. Thus no even dimensional sphere is $C$-trivial. It is well known that if $X$ is a closed orientable smooth $n$-manifold then there exists a degree one map $f:X\longrightarrow S^n$. This in conjunction with the Bott integrality theorem implies that no even dimensional closed orientable smooth manifold is $C$-trivial. In odd dimensions, integral homology spheres provide examples of manifolds that are $C$-trivial. In low dimensions we can say a lot more, often leading to a complete homological description of when a closed smooth manifold is $C$-trivial. 

In this note we try to derive, whenever possible, a complete description of when a closed smooth $n$-manifold is $C$-trivial, $n\leq 7$. 
Before stating the main results of this note we make a few remarks. For obvious reasons, every closed smooth $1$-manifold is $C$-trivial. 
Also, a necessary condition for a space $X$ to be $C$-trivial is that we must have $H^2(X;\mathbb Z)=0$ (see Lemma \ref{PreliminaryResults-1} below). This immediately implies that no compact surface is $C$-trivial. 

We now state the main results. In what follows, all manifolds are assumed to be closed connected and smooth. Our results are of two types: the general, and the ones specific to manifolds of dimension at most $7$. We begin with the general results. 

\begin{Thm}\label{GR_Even_Cohomology_Finite} Let $X$ be a $C$-trivial $n$-manifold. Then $H^{2q}(X;\mathbb Z)$ is a finite abelian 
	group for all $2 \le 2q < n$. \qed
	
\end{Thm}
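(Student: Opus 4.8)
The plan is to reduce the statement to the vanishing of the rational even-degree cohomology of $X$, and then to obtain that vanishing from $C$-triviality by means of the Chern character. Since $X$ is a closed smooth manifold it has the homotopy type of a finite $CW$-complex, so every group $H^j(X;\mathbb{Z})$ is finitely generated; consequently $H^{2i}(X;\mathbb{Z})$ is a finite abelian group if and only if $H^{2i}(X;\mathbb{Q})=0$. Thus it suffices to prove that $H^{2i}(X;\mathbb{Q})=0$ for all $i\ge 1$ — which is in fact slightly stronger than the stated range $2\le 2i<n$.

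Here is the argument I would write out. Recall first that for a complex vector bundle $\alpha$ over $X$ the Chern character splits as $\ch(\alpha)=\sum_{k\ge 0}\ch_k(\alpha)$ with $\ch_k(\alpha)\in H^{2k}(X;\mathbb{Q})$, that $\ch_0(\alpha)=\rk(\alpha)$, and that for $k\ge 1$ each $\ch_k(\alpha)$ is a polynomial with rational coefficients and \emph{no constant term} in $c_1(\alpha),\dots,c_k(\alpha)$ (this is immediate from the splitting principle, $\ch_k=\frac{1}{k!}\sum_j x_j^k$ in the Chern roots $x_j$). Now let $\alpha$ be an arbitrary complex vector bundle over $X$. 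By $C$-triviality $c_k(\alpha)=0$ for every $k\ge 1$, hence $\ch_k(\alpha)=0$ for every $k\ge 1$, and so $\ch(\alpha)=\rk(\alpha)\in H^0(X;\mathbb{Q})$. Since $X$ is a finite $CW$-complex, every class of $K^0(X)$ can be written as $[\alpha]-[\underline{\mathbb{C}}^m]$ for some bundle $\alpha$ and some $m\ge 0$, so $\ch\bigl([\alpha]-[\underline{\mathbb{C}}^m]\bigr)=\rk(\alpha)-m\in H^0(X;\mathbb{Q})$; therefore $\ch\bigl(K^0(X)\bigr)\subseteq H^0(X;\mathbb{Q})$. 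On the other hand, for a finite $CW$-complex the rational Chern character $\ch\otimes\mathbb{Q}\colon K^0(X)\otimes\mathbb{Q}\to\bigoplus_{k\ge 0}H^{2k}(X;\mathbb{Q})$ is an isomorphism, so the $\mathbb{Q}$-span of $\ch(K^0(X))$ is all of $H^{\mathrm{even}}(X;\mathbb{Q})$. Combining the two facts, $H^{\mathrm{even}}(X;\mathbb{Q})=H^0(X;\mathbb{Q})$, i.e. $H^{2i}(X;\mathbb{Q})=0$ for every $i\ge 1$; by the first paragraph this yields the theorem.

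I do not expect a serious obstacle. The points requiring a little care are precisely the two facts invoked above — that $\ch_k$ carries no constant term for $k\ge 1$, so that the vanishing of all positive Chern classes really does kill the positive-degree part of $\ch$, and the surjectivity of the rational Chern character for finite $CW$-complexes — together with the (elementary) passage from ``finite group'' to ``rational cohomology vanishes'', which uses that a closed manifold has finitely generated cohomology. The same computation could instead be packaged through the Atiyah--Hirzebruch spectral sequence for complex $K$-theory: its differentials are torsion-valued, so $E_\infty^{2i,-2i}\otimes\mathbb{Q}\cong H^{2i}(X;\mathbb{Q})$, and a nonzero free part of $H^{2i}(X;\mathbb{Z})$ would produce a virtual bundle whose first non-vanishing Chern class is non-torsion, contradicting $C$-triviality; but the Chern-character formulation is the most economical.
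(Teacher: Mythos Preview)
Your argument is correct. Interestingly, the paper presents the two routes you describe in the opposite order: its main proof is the Atiyah--Hirzebruch spectral sequence argument you sketch in your last paragraph (using that the AHSS differentials are torsion-valued, so a suitable multiple of any infinite-order class survives and is realized as $(q-1)!$ times a $c_q$ via the result of B\u{a}nic\u{a}--Putinar), and then a subsequent Remark gives exactly your Chern-character argument, noting it as ``a more direct proof.'' So you have recovered both of the paper's approaches, merely with the emphasis swapped. The Chern-character route is indeed cleaner and, as you note, actually yields the slightly stronger conclusion $H^{2i}(X;\mathbb{Q})=0$ for all $i\ge 1$; the AHSS route has the minor advantage of producing an explicit bundle with a prescribed nonzero Chern class, which is closer in spirit to the tools used later in the paper for the dimension-by-dimension analysis.
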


\begin{Thm}\label{GR_Odd_Dim_Manifolds}
	Let $X$ be a $C$-trivial $n$-manifold with $n$ odd. 
	\begin{enumerate}
		\item If $X$ is orientable, then $H^i(X;\mathbb Z)$ is finite for all $i$, $0<i<n$,
		\item If $X$ is non-orientable, then $H^1(X;\mathbb Z)\cong \mathbb Z$ and $H^i(X;\mathbb Z)$ is finite for all $i$, $2\leq i\leq n$. 
	\end{enumerate}
\end{Thm}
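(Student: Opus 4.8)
The plan is to reduce everything to Theorem \ref{GR_Even_Cohomology_Finite}, Poincaré duality, the vanishing of the Euler characteristic of a closed odd‑dimensional manifold, and --- for the non‑orientable case only --- a single Bockstein computation. Throughout write $b_i(X)=\rk H^i(X;\mathbb Z)$; since $X$ is a closed manifold each $H^i(X;\mathbb Z)$ is finitely generated, so ``$H^i(X;\mathbb Z)$ is finite'' is the same as ``$b_i(X)=0$''.

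For Part (1), suppose $X$ is orientable of odd dimension $n$. Because $n$ is odd, every positive even integer $2i$ either satisfies $2\le 2i<n$, where Theorem \ref{GR_Even_Cohomology_Finite} gives $b_{2i}(X)=0$, or satisfies $2i>n$, where $H^{2i}(X;\mathbb Z)=0$ trivially; hence $b_{2i}(X)=0$ for all $i\ge 1$. For odd $j$ with $0<j<n$ the complementary degree $n-j$ is even and lies strictly between $0$ and $n$, so $b_j(X)=b_{n-j}(X)=0$ by Poincaré duality, giving the claim. For Part (2), let $X$ be non‑orientable of odd dimension $n$. Lemma \ref{PreliminaryResults-1} gives $H^2(X;\mathbb Z)=0$, and the universal coefficient theorem then forces $H_1(X;\mathbb Z)$ to be torsion‑free, hence free, so $H^1(X;\mathbb Z)\cong\Hom(H_1(X;\mathbb Z),\mathbb Z)$ is free abelian of some rank $b_1$. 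Exactly as in Part (1), Theorem \ref{GR_Even_Cohomology_Finite} covers every positive even degree (all of which are $<n$), so $b_{2i}(X)=0$ for $i\ge 1$; together with the standard fact that $H^n(X;\mathbb Z)\cong\mathbb Z/2$ for a closed non‑orientable $n$‑manifold, this already shows $H^i(X;\mathbb Z)$ is finite for all even $2\le i\le n-1$ and for $i=n$.

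It remains to handle $H^1$ and the odd degrees $3\le i\le n-2$. First I would show $b_1\ge 1$. Since $X$ is non‑orientable, $w_1(X)\ne 0$ in $H^1(X;\mathbb Z/2)$; pick a real line bundle $L$ with $w_1(L)=w_1(X)$ and consider its complexification $L\otimes_{\mathbb R}\mathbb C$, whose first Chern class equals the integral Bockstein $\beta(w_1(X))\in H^2(X;\mathbb Z)$ (the connecting map of $0\to\mathbb Z\xrightarrow{2}\mathbb Z\to\mathbb Z/2\to 0$). As $X$ is $C$‑trivial this Chern class vanishes, so $w_1(X)$ is the mod‑$2$ reduction of some $\widetilde w_1\in H^1(X;\mathbb Z)$; being a lift of a nonzero class it is nonzero, and being an element of the free group $H^1(X;\mathbb Z)$ it has infinite order, so $b_1\ge 1$. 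Finally, since $X$ is closed and odd‑dimensional, $\chi(X)=0$, and
\[
0=\chi(X)=\sum_i(-1)^i b_i(X)=\Big(1+\sum_{i\ge 1}b_{2i}(X)\Big)-\sum_{j\ge 0}b_{2j+1}(X)=1-\sum_{j\ge 0}b_{2j+1}(X),
\]
so $\sum_{j\ge 0}b_{2j+1}(X)=1$. With $b_1\ge 1$ this forces $b_1=1$ and $b_{2j+1}(X)=0$ for all $j\ge 1$, i.e. $H^1(X;\mathbb Z)\cong\mathbb Z$ and $H^i(X;\mathbb Z)$ is finite for every odd $i$ with $3\le i\le n-2$. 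Combining with the even‑ and top‑degree statements above completes Part (2).

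The step that will require the most care is the Bockstein input in the non‑orientable case: one must check carefully that $c_1(L\otimes_{\mathbb R}\mathbb C)=\beta(w_1(L))$, so that $C$‑triviality genuinely applies to this class, and then track through the universal coefficient theorem that the resulting lift $\widetilde w_1$ is of infinite order --- that is, that $C$‑triviality really does promote the nontrivial mod‑$2$ class $w_1(X)$ to a free integral class. Everything else is a formal consequence of Theorem \ref{GR_Even_Cohomology_Finite}, Poincaré duality, and $\chi(X)=0$.
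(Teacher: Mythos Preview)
Your argument is correct, and in places it is cleaner than the paper's own proof.

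For Part~(1) the paper does \emph{not} use Poincar\'e duality directly; it instead writes $\chi(X)=0$, kills the even Betti numbers via Theorem~\ref{GR_Even_Cohomology_Finite}, and concludes that the sum of the remaining odd Betti numbers below degree~$n$ must vanish. Your observation that for odd $0<j<n$ one simply has $b_j(X)=b_{n-j}(X)=0$ by integral Poincar\'e duality (since $n-j$ is even and in the range of Theorem~\ref{GR_Even_Cohomology_Finite}) gets to the same conclusion in one line and avoids the Euler-characteristic bookkeeping entirely.

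For Part~(2) the two proofs agree on the even degrees and on the Euler-characteristic step $\sum b_{2j+1}=1$; the only real difference is how one shows $b_1\ge 1$. The paper uses $\mathbb Z_2$-Poincar\'e duality: it identifies $H_{n-1}(X;\mathbb Z)\cong\mathbb Z_2$ (finite by Theorem~\ref{GR_Even_Cohomology_Finite}, torsion $\mathbb Z_2$ by non-orientability), computes $H^{n-1}(X;\mathbb Z_2)$, and compares it with $H^1(X;\mathbb Z_2)\cong\mathbb Z_2^{\,d}$ to force $d\ge 1$. Your route---$C$-triviality gives $H^2(X;\mathbb Z)=0$, so the Bockstein $\beta\colon H^1(X;\mathbb Z_2)\to H^2(X;\mathbb Z)$ vanishes, hence $w_1(X)\ne 0$ lifts to a nonzero (and therefore infinite-order) class in the free group $H^1(X;\mathbb Z)$---is a genuinely different and more direct mechanism. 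Note that the complexified line bundle is not actually needed: once you know $H^2(X;\mathbb Z)=0$, the surjectivity of $H^1(X;\mathbb Z)\to H^1(X;\mathbb Z_2)$ is immediate from the long exact coefficient sequence, so you can drop the verification of $c_1(L\otimes_{\mathbb R}\mathbb C)=\beta(w_1(L))$ altogether. Either approach works; yours avoids computing any cohomology in degree $n-1$, while the paper's makes the link between non-orientability and the top homology more explicit.
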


We now state results that identify $C$-trivial manifolds of dimension at most 7. As discussed above, there are no $C$-trivial manifolds in dimension $1$ and $2$. In dimension $3$, we show that an orientable $3$-manifold is $C$-trivial if and only if it is an integral homology $3$-sphere (see Theorem \ref{3_Manifold} below) and in the non-orientable case we give homological restrictions on a $3$-manifold to be $C$-trivial (see Theorem \ref{3_Manifold} below). Next we show that no $4$-manifold is $C$-trivial (see Theorem \ref{4_Manifold} below). For dimensions $5$, $6$ and $7$, the results are as follows.

	
	
	
	\begin{Thm} \label{5-Manifold-Orientable} Let $X$ be an orientable $5$-manifold. Then $X$ is $C$-trivial if and only if the integral homology groups of $X$ are of the form
		$$
		H_i(X;\mathbb Z) = \left\{ \begin{array}{cl}
			\mathbb Z & i=0\\
			0 & i=1\\
			F & i=2\\
			0 & i=3\\
			0 & i=4\\
			\mathbb Z & i=5. 
		\end{array}\right. $$ 
		where $F$ is a finite abelian group. 
	\end{Thm}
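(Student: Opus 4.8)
The plan is to prove both implications by analyzing the Atiyah-Hirzebruch spectral sequence (AHSS) for complex $K$-theory of a closed orientable $5$-manifold $X$, exploiting Poincaré duality to pin down the homology groups. For the forward direction, assume $X$ is $C$-trivial. By Theorem \ref{GR_Odd_Dim_Manifolds}(1), $H^i(X;\mathbb Z)$ is finite for $0<i<5$; combined with Poincaré duality and the universal coefficient theorem this already forces $H_1(X;\mathbb Z)$ to be finite and torsion-dual to $H^2(X;\mathbb Z)$, while $H_4(X;\mathbb Z)\cong H^1(X;\mathbb Z)$ is torsion-free hence $0$, and correspondingly $H^2$'s free part vanishes. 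The remaining work is to kill $H_1$ (equivalently $H^2$, which already vanishes by Lemma \ref{PreliminaryResults-1} since $C$-triviality forces $H^2(X;\mathbb Z)=0$) and $H_3$. Here I would run the AHSS: the only potentially nonzero differential affecting low-degree classes is $d_3\colon H^k(X;\mathbb Z)\to H^{k+3}(X;\mathbb Z)$, which is reduction mod $2$ followed by $Sq^3$ (equivalently $\beta Sq^2\rho$). On an orientable manifold of dimension $\le 5$ this $d_3$ vanishes on $H^2$ and $H^3$ for dimension reasons, so $K$-theory classes are detected by their Chern character / Chern classes in a controlled way; $C$-triviality then forces the relevant associated graded pieces, and hence $H^3(X;\mathbb Z)$, to be trivial. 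Dualizing gives $H_3(X;\mathbb Z)=0$ and $H_2(X;\mathbb Z)=F$ finite, which is the asserted form.

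For the converse, suppose $X$ has the stated homology. Then $H^2(X;\mathbb Z)\cong F$, $H^3(X;\mathbb Z)=0$, $H^4(X;\mathbb Z)\cong H_1(X;\mathbb Z)=0$, and $H^5(X;\mathbb Z)\cong\mathbb Z$. Let $\alpha$ be a complex vector bundle over $X$; we must show $c_i(\alpha)=0$ for all $i\ge 1$. We have $c_1(\alpha)\in H^2(X;\mathbb Z)=F$, $c_2(\alpha)\in H^4(X;\mathbb Z)=0$ (so $c_2=0$ automatically), and $c_i=0$ for $i\ge 3$ by dimension. The only thing to rule out is a nonzero $c_1(\alpha)\in F$. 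Since $c_1(\alpha)=c_1(\det\alpha)$ and every class in $H^2(X;\mathbb Z)$ is the first Chern class of a complex line bundle, it suffices to show that the first Chern class of every line bundle over $X$ vanishes — but a torsion class in $H^2$ is not obviously a Chern class obstruction, so instead I would argue via $K$-theory: $c_1$ factors through the map $\widetilde{K}^0(X)\to H^2(X;\mathbb Z)$ coming from the AHSS edge homomorphism, and show using the AHSS (with $H^{\mathrm{odd}}$ contributing and the $d_3$ differentials) that the image of this edge map is exactly the subgroup killed, forcing $c_1(\alpha)=0$. Concretely, the class $c_1$ of a line bundle lies in the image of $d_3\colon H^{-1}(\mathrm{pt})\otimes H^2\dots$ — more precisely I would show that every element of $H^2(X;\mathbb Z)=F$ that is a first Chern class must survive to $E_\infty^{2,0}$, and identify the relevant differential $d_3\colon E_3^{2,0}\to E_3^{5,-2}$ landing in $H^5(X;\mathbb Z)\cong\mathbb Z$; since this differential is torsion-valued on a torsion source it is zero, so $F$ survives, and then comparison with Bott integrality on the degree-one map $X\to S^5$ combined with the structure of $K^0(X)$ shows the Chern class must already be trivial.

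The main obstacle will be the converse direction, specifically ruling out a nonzero torsion first Chern class. The forward direction is essentially bookkeeping with Poincaré duality plus the already-proved general theorems and the vanishing of the low-degree AHSS differentials. The converse genuinely requires understanding $\widetilde{K}^0(X)$: one must show that the AHSS collapses enough that $\widetilde{K}^0(X)$ has no contribution in filtration $2$, i.e. that $E_\infty^{2,0}=0$, which would force $c_1(\alpha)=0$ for all $\alpha$. I expect this to follow from analyzing $d_3$ and $d_5$ on the short list of nonzero groups $H^2=F$, $H^5=\mathbb Z$ (and the vanishing of $H^1,H^3,H^4$), using that the relevant odd differentials are stable cohomology operations of order $2$ hence kill no infinite-order obstruction but also impose the needed relations — the delicate point is confirming that $E_\infty^{2,0}$ actually vanishes rather than merely being finite, which is exactly what pins down $C$-triviality as opposed to the weaker finiteness in Theorem \ref{GR_Odd_Dim_Manifolds}.
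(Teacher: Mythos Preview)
Your converse direction rests on a miscomputation: with the stated homology ($H_1=0$, $H_2=F$), the universal coefficient theorem gives
\[
H^2(X;\mathbb Z)\cong \Hom(F,\mathbb Z)\oplus\mathrm{Ext}(0,\mathbb Z)=0,
\]
not $F$; the torsion group $F$ appears in $H^3$, not $H^2$. Likewise $H^4(X;\mathbb Z)=\Hom(0,\mathbb Z)\oplus\mathrm{Ext}(0,\mathbb Z)=0$. So every positive even-degree cohomology group below the top vanishes, and $c(\alpha)=1$ for every complex bundle $\alpha$ is immediate. All of your AHSS analysis, the edge-map argument, the $d_3$ and $d_5$ discussion, and the ``main obstacle'' paragraph are addressing a problem that does not exist. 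The paper's proof of the converse is accordingly one line.

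For the forward direction your outline is roughly right through $H_1=H_4=0$, but your mechanism for killing $H_3$ is unclear: you write that ``$C$-triviality then forces the relevant associated graded pieces, and hence $H^3(X;\mathbb Z)$, to be trivial,'' yet Chern classes live in even degrees and say nothing directly about $H^3$. What is actually needed is $H^4(X;\mathbb Z)=0$ (so that $\mathrm{Ext}(H_3,\mathbb Z)=0$ and $H_3$ is torsion-free, hence zero by finiteness). The paper obtains $H^4=0$ not from a bare AHSS argument but from Lemma~\ref{PreliminaryResults-2}, which uses the Banica--Putinar criterion for a triple $(c_1,c_2,c_3)$ to be the Chern classes of a rank-$3$ bundle on a complex of dimension $\le 6$: any nonzero $c_2\in H^4$ can be realized, contradicting $C$-triviality. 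Your AHSS sketch could in principle reach the same conclusion via Theorem~\ref{BoundaryMap_AHSS_Kernel} (every class in $H^4$ lies in the kernel of all $d_{2k+1}$ for dimension reasons, hence is a $c_2$), but you should state that explicitly rather than invoking $H^3$.
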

	In Example \ref{Orientable_5_7_Manifold_Example}, we provide oriented manifolds of dimensions $5$ which are $C$-trivial via the above classification.
	In the non-orientable case we have the following statement.
	
	\begin{Thm} \label{5-Manifold-Non-Orientable} Let $X$ be a non-orientable $5$-manifold. Then $X$ is $C$-trivial if and only if the integral homology groups of $X$ are of the form
		$$
		H_i(X;\mathbb Z)=\left\{ \begin{array}{cl}
			\mathbb Z & i=0\\
			\mathbb Z& i=1\\
			F & i=2\\
			0 & i=3\\
			\mathbb Z_2 & i=4\\
			0 & i=5. 
		\end{array}\right.$$
		where $F$ is a finite abelian group.
	\end{Thm}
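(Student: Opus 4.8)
The plan is to prove the two implications separately; the ``if'' direction is essentially formal and the ``only if'' direction carries all the content. The key observation is that a closed smooth $5$-manifold $X$ with $H^2(X;\mathbb{Z})=0$ and $H^4(X;\mathbb{Z})=0$ is automatically $C$-trivial: for any complex vector bundle $\alpha$ over $X$ one has $c_i(\alpha)\in H^{2i}(X;\mathbb{Z})$, and $H^{2i}(X;\mathbb{Z})=0$ for $i\ge 3$ by dimension, so each $c_i(\alpha)$ with $i\ge 1$ lies in a trivial group and $c(\alpha)=1$. For the homology profile in the statement the universal coefficient theorem yields exactly $H^2(X;\mathbb{Z})=H^4(X;\mathbb{Z})=0$ (since $H_1=\mathbb{Z}$, $H_2=F$ finite, $H_3=0$, $H_4=\mathbb{Z}_2$), so the ``if'' direction is immediate. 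By the same observation, together with Lemma~\ref{PreliminaryResults-1} (which already gives $H^2(X;\mathbb{Z})=0$ as a necessary condition for $C$-triviality), the whole theorem reduces, in the forward direction, to (a) deducing $H^4(X;\mathbb{Z})=0$ from $C$-triviality, and (b) reading off the integral homology groups.

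I would dispose of (b) first, assuming (a). Suppose $X$ is $C$-trivial. Lemma~\ref{PreliminaryResults-1} gives $H^2(X;\mathbb{Z})=0$, so by the universal coefficient theorem $H_2(X;\mathbb{Z})=:F$ is finite and $H_1(X;\mathbb{Z})$ is torsion free; Theorem~\ref{GR_Odd_Dim_Manifolds}(2) gives $H^1(X;\mathbb{Z})\cong\mathbb{Z}$, hence $H_1(X;\mathbb{Z})\cong\mathbb{Z}$, and $H^i(X;\mathbb{Z})$ finite for $2\le i\le 5$, hence $H_3(X;\mathbb{Z})$ and $H_4(X;\mathbb{Z})$ are finite. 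By (a) and the universal coefficient theorem $H_3(X;\mathbb{Z})$ is torsion free, so $H_3(X;\mathbb{Z})=0$. Finally $H_0(X;\mathbb{Z})=\mathbb{Z}$ by connectedness, while $H_5(X;\mathbb{Z})=0$ and $H^5(X;\mathbb{Z})\cong\mathbb{Z}_2$ because $X$ is closed and non-orientable; as $H_4$ is finite, the universal coefficient theorem identifies $H_4(X;\mathbb{Z})\cong H^5(X;\mathbb{Z})\cong\mathbb{Z}_2$, giving the asserted list.

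The heart of the argument is (a), and here I would run the Atiyah--Hirzebruch spectral sequence for complex $K$-theory, with $E_2^{p,q}=H^p(X;\mathbb{Z})$ for $q$ even and $0$ for $q$ odd, converging to $K^{p+q}(X)$, and concentrate on the term $E_\ast^{4,-4}$, which contributes to $\widetilde{K}^0(X)$. Since $\dim X=5$, no differential leaves $E_\ast^{4,-4}$ nontrivially (its image would lie in $E_\ast^{p,q}$ with $p\ge 7$), and the only differential that can enter it is $d_3\colon E_3^{1,-2}=H^1(X;\mathbb{Z})\to E_3^{4,-4}=H^4(X;\mathbb{Z})$, which is the integral operation $\beta\,Sq^2\rho$ (mod-$2$ reduction, then $Sq^2$, then the integral Bockstein); this vanishes on $H^1$ since $Sq^2$ acts as zero on one-dimensional classes. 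Hence $E_\infty^{4,-4}=H^4(X;\mathbb{Z})$, and because the skeletal filtration of $\widetilde{K}^0(X)$ terminates with $F^6\widetilde{K}^0(X)=0$, the edge map gives $F^4\widetilde{K}^0(X)\cong H^4(X;\mathbb{Z})$. For $\xi\in F^4\widetilde{K}^0(X)$ one has $c_1(\xi)=0$ (as $H^2(X;\mathbb{Z})=0$), and the integral leading-term relation for a class of Atiyah--Hirzebruch filtration exactly $2k$ — that its first nonzero Chern class equals $\pm(k-1)!$ times its edge class — specialises at $k=2$ to $c_2(\xi)=\pm\bar\xi$, where $\bar\xi\in E_\infty^{4,-4}=H^4(X;\mathbb{Z})$ is the image of $\xi$. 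Thus every element of $H^4(X;\mathbb{Z})$ arises as $\pm c_2$ of an element of $\widetilde{K}^0(X)$, hence, after adding a trivial summand (which does not change Chern classes), as $\pm c_2$ of a complex vector bundle over $X$. $C$-triviality therefore forces $H^4(X;\mathbb{Z})=0$.

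The step I expect to be the genuine obstacle is the integral leading-term relation $c_2(\xi)=\pm\bar\xi$: since $H^4(X;\mathbb{Z})$ is finite here, the Chern character is blind to it and no rational argument suffices, so one must track the Atiyah--Hirzebruch filtration integrally — identifying the edge homomorphism on $F^4\widetilde{K}^0(X)$ with the primary obstruction class, verifying that decomposable corrections to $c_2$ vanish (automatic here because $c_1$ does), and fixing the coefficient $(k-1)!=1$ via the Bott-type integrality recalled in the introduction. This, together with the control of the orders of the Atiyah--Hirzebruch differentials needed in the higher-dimensional cases, is the machinery the paper relies on; once it is in place, the remaining steps are the routine homological algebra above.
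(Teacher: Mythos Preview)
Your argument is correct. For the crucial step (a)---forcing $H^4(X;\mathbb Z)=0$---the paper takes a slightly different route: rather than analysing the spectral sequence by hand, it invokes Lemma~\ref{PreliminaryResults-2}(2), proved via the B\u{a}nic\u{a}--Putinar criterion that a triple $(c_1,c_2,c_3)$ is realised by a rank-$3$ bundle over a complex of dimension $\le 7$ (with no $2$-torsion in $H^7$) if and only if $\rho_2(c_3)=\rho_2(c_1c_2)+Sq^2\rho_2(c_2)$, yielding $H^2=H^4=0$ uniformly for every $C$-trivial complex of dimension $\le 6$. Your approach instead checks directly that no Atiyah--Hirzebruch differential touches $E^{4,-4}$ and then uses the leading-term identification $c_2(\xi)=\pm\bar\xi$, which is precisely Theorem~\ref{BoundaryMap_AHSS_Kernel} at $q=2$ (so the step you flag as the ``genuine obstacle'' is already available in the paper and needs no further work). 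Both proofs therefore rest on the same B\u{a}nic\u{a}--Putinar input, just packaged differently: the paper's Lemma~\ref{PreliminaryResults-2} is cleaner here because it handles dimensions $4$--$6$ in one stroke, whereas your direct spectral-sequence computation is closer in spirit to what the paper does later in the non-orientable $7$-dimensional case (Theorem~\ref{7_manifold_non_orientable}).
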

	
	As noted earlier, in dimension $6$, no closed connected orientable manifold is $C$-trivial. We prove the following for the non-orientable case.
	\begin{Thm} 
		\label{6-Manifold-NonOrientable} Let $X$ be a non-orientable $6$-manifold. Then $X$ is $C$-trivial if and only if the integral homology groups of $X$ are of the form
		$$
		H_i(X;\mathbb Z) = \left\{ \begin{array}{cl}
			\mathbb Z  & \text{if } i = 0\\
			\mathbb Z^{e_2} & \text{if } i = 1\\
			F & \text{if } i = 2\\
			\mathbb Z^{e_3} & \text{if } i = 3\\
			F' & \text{if } i = 4\\
			\mathbb Z^{e_1} \oplus \mathbb Z_2 & \text{if } i = 5\\  
			0 & \text{if } i = 6\\  
		\end{array}\right.$$
		where $e_2 \neq 0$, $F,F'$ are finite abelian groups such that $\mathrm{Ext}(F,\mathbb Z_2)\cong \mathrm{Ext}(F',\mathbb Z_2)$.
	\end{Thm}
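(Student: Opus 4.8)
The plan is to show that, for a closed connected non-orientable $6$-manifold $X$, being $C$-trivial is equivalent to the two cohomological conditions $H^{2}(X;\mathbb{Z})=0$ and $H^{4}(X;\mathbb{Z})=0$, and then to observe that these two conditions are exactly what the stated list of homology groups encodes. By the universal coefficient theorem, $H^{2}(X;\mathbb{Z})=0$ is equivalent to ``$H_{1}(X;\mathbb{Z})$ free and $H_{2}(X;\mathbb{Z})$ finite'' and $H^{4}(X;\mathbb{Z})=0$ is equivalent to ``$H_{3}(X;\mathbb{Z})$ free and $H_{4}(X;\mathbb{Z})$ finite''. Everything else on the list is automatic for a closed connected non-orientable $6$-manifold and is unrelated to $C$-triviality: $H_{6}(X;\mathbb{Z})=0$ and $H^{6}(X;\mathbb{Z})\cong\mathbb{Z}_{2}$ by Poincaré duality, whence $H_{5}(X;\mathbb{Z})\cong\mathbb{Z}^{d_{1}}\oplus\mathbb{Z}_{2}$ by universal coefficients; $w_{1}(X)\neq 0$ forces $H^{1}(X;\mathbb{Z}_{2})\neq 0$, hence $d_{2}\neq 0$; and mod-$2$ Poincaré duality gives $\dim_{\mathbb{Z}_{2}}H^{2}(X;\mathbb{Z}_{2})=\dim_{\mathbb{Z}_{2}}H^{4}(X;\mathbb{Z}_{2})$, which (using that $H_{1},H_{3}$ are free and $H_{2},H_{4}$ finite) translates via universal coefficients into $\mathrm{Ext}(F,\mathbb{Z}_{2})\cong\mathrm{Ext}(F',\mathbb{Z}_{2})$. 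So the theorem is equivalent to the assertion: \emph{$X$ is $C$-trivial if and only if $H^{2}(X;\mathbb{Z})=H^{4}(X;\mathbb{Z})=0$.}

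For the necessity direction, $H^{2}(X;\mathbb{Z})=0$ is Lemma \ref{PreliminaryResults-1}, and Theorem \ref{GR_Even_Cohomology_Finite} already gives that $H^{4}(X;\mathbb{Z})$ is finite; the point is to upgrade this to $H^{4}(X;\mathbb{Z})=0$. I would use the Atiyah--Hirzebruch spectral sequence for complex $K$-theory: the first differential $d_{3}$, which is the composite of reduction mod $2$, $\mathrm{Sq}^{2}$, and the integral Bockstein, vanishes on $H^{1}(X;\mathbb{Z})$ because $\mathrm{Sq}^{2}$ kills one-dimensional mod-$2$ classes, and no other differential can affect $E_{3}^{4,-4}$ for dimension reasons, so $E_{\infty}^{4,-4}\cong H^{4}(X;\mathbb{Z})$; since $H^{2}(X;\mathbb{Z})=0$ kills $E_{\infty}^{2,-2}$, the $K$-theory filtration of $\widetilde{K}^{0}(X)$ yields a surjection $\widetilde{K}^{0}(X)\twoheadrightarrow H^{4}(X;\mathbb{Z})$. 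If $H^{4}(X;\mathbb{Z})\neq 0$, pick $x\in\widetilde{K}^{0}(X)$ mapping to a nonzero element; by the standard comparison of the second Chern class with the $K$-theoretic filtration one gets $c_{2}(x)\neq 0$, and realising $x$ by a genuine bundle of rank $\geq 3$ (the stable range, since $\dim X=6$) contradicts $C$-triviality. Hence $H^{4}(X;\mathbb{Z})=0$.

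For the sufficiency direction, assume $H^{2}(X;\mathbb{Z})=H^{4}(X;\mathbb{Z})=0$, so $H^{6}(X;\mathbb{Z})\cong\mathbb{Z}_{2}$ and every complex bundle $\alpha$ over $X$ has $c(\alpha)=1+c_{3}(\alpha)$ with $c_{3}(\alpha)\in\mathbb{Z}_{2}$. Bundles of rank $\leq 2$ have $c_{3}=0$ trivially and every bundle of rank $\geq 3$ is stable, so it suffices to show that $c_{3}\colon\widetilde{K}^{0}(X)\to H^{6}(X;\mathbb{Z})$ vanishes. The vanishing of $H^{2}$ and $H^{4}$ collapses the $K$-theory filtration to a single stage, $\widetilde{K}^{0}(X)\cong E_{\infty}^{6,-6}$, which is a quotient of $H^{6}(X;\mathbb{Z})\cong\mathbb{Z}_{2}$ and hence is $2$-torsion. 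Now I apply the comparison between $c_{3}$ and the filtration, in which the pertinent factor is $(3-1)!=2$: the class of $c_{3}(x)$ in $E_{\infty}^{6,-6}$ equals $\pm 2\,\overline{x}=0$, so $c_{3}(x)$ lies in the subgroup $B\subseteq\mathbb{Z}_{2}$ of classes hit by differentials. If $B=\mathbb{Z}_{2}$ then $\widetilde{K}^{0}(X)=0$ and $\alpha$ is stably trivial; if $B=0$ then $c_{3}(x)=0$ outright. Either way $c(\alpha)=1$, so $X$ is $C$-trivial.

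The step I expect to be the main obstacle is pinning down the two comparison statements between Chern classes and the $K$-theoretic filtration of $\widetilde{K}^{0}(X)$ --- in particular the precise relation $[\,c_{i}(x)\,]=\pm(i-1)!\,\overline{x}$ in $E_{\infty}^{2i,-2i}$, with the $(i-1)!$ factor correct; this is the ``order of a differential'' type input advertised in the introduction, and it is exactly what makes naive Bott integrality (which only sees $H^{*}(X;\mathbb{Q})$ and is blind to the torsion in $H^{6}$) insufficient by itself. A secondary, purely bookkeeping obstacle is verifying that every class in $\widetilde{K}^{0}(X)$ is realised by an honest bundle of rank $\geq 3$, and that the mod-$2$ Poincaré duality and universal coefficient computations really do pin down all the ``automatic'' entries of the homology list, especially the $\mathrm{Ext}$ relation between $F$ and $F'$.
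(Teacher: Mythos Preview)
Your argument is correct and arrives at the same reformulation the paper implicitly uses, namely that $X$ is $C$-trivial iff $H^{2}(X;\mathbb{Z})=H^{4}(X;\mathbb{Z})=0$, with the remaining entries of the homology table forced by non-orientability and mod-$2$ Poincar\'e duality. The difference lies in how the two key implications are justified.

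For the necessity of $H^{4}=0$, the paper does not touch the Atiyah--Hirzebruch spectral sequence at all; instead it invokes Lemma~\ref{PreliminaryResults-2}, which rests on the Banica--Putinar criterion that a triple $(c_{1},c_{2},c_{3})$ occurs as Chern classes of a rank-$3$ bundle on a complex of dimension $\le 7$ iff $c_{3}\equiv c_{1}c_{2}+Sq^{2}c_{2}$ in $H^{6}(X;\mathbb{Z}_{2})$. Since $\rho_{2}\colon H^{6}(X;\mathbb{Z})\to H^{6}(X;\mathbb{Z}_{2})$ is surjective, any nonzero $c_{2}\in H^{4}$ can be completed to such a triple, contradicting $C$-triviality. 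Your route---showing $d_{3}$ vanishes on $H^{1}$ so that $E_{\infty}^{4,-4}=H^{4}$ and then using that the edge map is $\pm c_{2}$ (the factor $(2-1)!=1$)---is equally valid and in fact matches exactly what Theorem~\ref{BoundaryMap_AHSS_Kernel} would give once one notes that every class in $H^{4}$ is a permanent cycle.

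For the sufficiency direction the paper again uses Banica--Putinar: with $c_{1}=c_{2}=0$ the congruence forces $\rho_{2}(c_{3})=0$, and since $\rho_{2}\colon H^{6}(X;\mathbb{Z})\cong\mathbb{Z}_{2}\to H^{6}(X;\mathbb{Z}_{2})\cong\mathbb{Z}_{2}$ is an isomorphism, $c_{3}=0$. Your version obtains the same conclusion from the relation $c_{3}(x)=\pm 2\cdot[\phi(x)]$ for $x\in F^{6}$, which vanishes because $H^{6}(X;\mathbb{Z})$ is $2$-torsion. Both arguments amount to the same divisibility, packaged differently: the paper's $Sq^{2}$-congruence and your $(3-1)!=2$ are two faces of the same Bott divisibility on $S^{6}$. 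Your case split on $B$ is harmless but unnecessary: once you know $c_{3}(x)=\pm 2\cdot(\text{lift of }\bar{x})$ as an honest equality in $H^{6}$ (not merely modulo $B$), the vanishing is immediate regardless of $B$.

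What each approach buys: the paper's is shorter and black-boxes the delicate AHSS/Chern-class comparison into a citable criterion, at the cost of importing an external result. Yours is more self-contained and makes explicit the mechanism (the $(i-1)!$ factor) that the abstract advertises as the ``orders of differentials'' input; the price is that you must justify the comparison $c_{i}(x)=\pm(i-1)!\cdot\bar{x}$ for $x\in F^{2i}$, which---as you correctly flag---is the one nontrivial step. That comparison is standard (it follows from naturality and the computation $c_{i}(\beta_{i})=\pm(i-1)!$ on $S^{2i}$, together with the fact that $c_{i}$ kills $F^{2i+2}$), so there is no gap.
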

	For orientable $7$-manifolds we prove the following. 
	
	\begin{Thm} \label{7_manifold_orientable} Let $X$ be an orientable $7$-manifold. Then $X$ is $C$-trivial if and only if the integral homology groups of $X$ are of the form
		$$
		H_i(X;\mathbb Z) = \left\{ \begin{array}{cl}
			\mathbb Z & i=0\\
			0 & i=1\\
			F & i=2\\
			0 & i=3\\
			F & i=4\\
			0 & i=5\\
			0 & i=6\\
			\mathbb Z & i=7\\ 
		\end{array}\right.$$
		where $F$ is a finite abelian group.
	\end{Thm}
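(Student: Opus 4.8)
The plan is to prove the two implications separately. The ``if'' direction is immediate from Poincar\'e duality. Assume $H_*(X;\mathbb Z)$ has the stated form; since $X$ is closed, connected and orientable of dimension $7$, Poincar\'e duality yields $H^2(X;\mathbb Z)\cong H_5(X;\mathbb Z)=0$, $H^4(X;\mathbb Z)\cong H_3(X;\mathbb Z)=0$ and $H^6(X;\mathbb Z)\cong H_1(X;\mathbb Z)=0$, and $H^{2i}(X;\mathbb Z)=0$ for $2i\ge 8$ for dimension reasons. Thus every positive-degree even cohomology group of $X$ vanishes, so for any complex vector bundle $\alpha$ over $X$ we have $c_k(\alpha)\in H^{2k}(X;\mathbb Z)=0$ for all $k\ge 1$; hence $c(\alpha)=1$ and $X$ is $C$-trivial.

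For the converse, assume $X$ is $C$-trivial. By Theorem~\ref{GR_Odd_Dim_Manifolds}(1) the group $H^i(X;\mathbb Z)$ is finite for $0<i<7$, so by the universal coefficient theorem $H_i(X;\mathbb Z)$ is finite for $1\le i\le 6$, while $H_0(X;\mathbb Z)=H_7(X;\mathbb Z)=\mathbb Z$. Lemma~\ref{PreliminaryResults-1} gives $H^2(X;\mathbb Z)=0$, which, as $H_2(X;\mathbb Z)$ is finite, says $\mathrm{Ext}(H_1(X;\mathbb Z),\mathbb Z)=0$; thus the finite group $H_1(X;\mathbb Z)$ is torsion-free, so $H_1(X;\mathbb Z)=0$ and $H^1(X;\mathbb Z)=0$. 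Poincar\'e duality then gives $H_6(X;\mathbb Z)\cong H^1(X;\mathbb Z)=0$ and $H_5(X;\mathbb Z)\cong H^2(X;\mathbb Z)=0$; moreover $H^3(X;\mathbb Z)\cong\mathrm{Ext}(H_2(X;\mathbb Z),\mathbb Z)\cong H_2(X;\mathbb Z)$, whence $H_4(X;\mathbb Z)\cong H^3(X;\mathbb Z)\cong H_2(X;\mathbb Z)$. Setting $F:=H_2(X;\mathbb Z)$, it remains only to prove $H_3(X;\mathbb Z)=0$, equivalently $H^4(X;\mathbb Z)=0$.

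For this, consider the Atiyah--Hirzebruch spectral sequence $E_2^{p,q}=H^p(X;K^q(\mathrm{pt}))\Rightarrow K^{p+q}(X)$ for complex $K$-theory, whose $E_2$-page is $H^p(X;\mathbb Z)$ for $q$ even and $0$ for $q$ odd. On a $7$-manifold the only differentials that can touch the position $(4,-4)$ are $d_3$'s: the incoming $d_3\colon E_3^{1,-2}\to E_3^{4,-4}$ vanishes since $H^1(X;\mathbb Z)=0$, and the outgoing $d_3\colon E_3^{4,-4}\to E_3^{7,-6}$ vanishes since it maps the finite group $E_3^{4,-4}=H^4(X;\mathbb Z)$ into $E_3^{7,-6}\cong H^7(X;\mathbb Z)\cong\mathbb Z$; hence $E_\infty^{4,-4}=H^4(X;\mathbb Z)$. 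On the other hand, $H^2(X;\mathbb Z)=0$ (together with $K^*(\mathrm{pt})$ being concentrated in even degrees) forces the filtration on $\widetilde K^0(X)$ to start in degree $4$, i.e.\ $\widetilde K^0(X)=F^4\widetilde K^0(X)$, so the projection onto the bottom filtration quotient is a surjection $\widetilde K^0(X)\twoheadrightarrow E_\infty^{4,-4}$. By the standard relation between the first potentially nonzero Chern class of a stable bundle and its leading term in the spectral sequence (the class in question being $c_2$, with normalizing factor $(2-1)!=1$), this surjection agrees up to sign with $c_2\colon\widetilde K^0(X)\to H^4(X;\mathbb Z)$. Since $X$ is $C$-trivial, $c_2$ is identically zero, so $H^4(X;\mathbb Z)=E_\infty^{4,-4}=0$ and $H_3(X;\mathbb Z)=0$ by Poincar\'e duality.

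I expect the main obstacle to be this last step: making precise the identification of $c_2$ on $\widetilde K^0(X)$ with the projection onto $E_\infty^{4,-4}$, and in particular the surjectivity of that projection, which is exactly where $C$-triviality has genuine content---it says that no nonzero class of $H^4(X;\mathbb Z)$ is realized as the second Chern class of a bundle over $X$. The remaining steps are routine manipulations with the universal coefficient theorem, Poincar\'e duality and the general results already established.
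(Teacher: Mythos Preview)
Your argument is correct, and the routine parts (the ``if'' direction, the use of Theorem~\ref{GR_Odd_Dim_Manifolds} and Lemma~\ref{PreliminaryResults-1}, and the Poincar\'e duality/UCT bookkeeping) match the paper's proof essentially verbatim. The one substantive difference is the proof that $H^4(X;\mathbb Z)=0$. The paper does this via Lemma~\ref{Lemma-7-Dimension-Orientable}, which in turn rests on the Banica--Putinar criterion that a triple $(c_1,c_2,c_3)$ arises from a rank-$3$ bundle iff $\rho_2(c_3)=\rho_2(c_1c_2)+Sq^2\rho_2(c_2)$; orientability gives $H^7(X;\mathbb Z)\cong\mathbb Z$ (no $2$-torsion), so $\rho_2\colon H^6(X;\mathbb Z)\to H^6(X;\mathbb Z_2)$ is onto and any nonzero $c_2$ can be completed to a realizable triple $(0,c_2,c_3)$. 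You instead run the Atiyah--Hirzebruch spectral sequence: $H^1=0$ kills the incoming $d_3$, finiteness of $H^4$ kills the outgoing $d_3$ into $H^7\cong\mathbb Z$, so $E_\infty^{4,-4}=H^4$, and the edge map $\widetilde K^0(X)\to H^4$ is $\pm c_2$ because $(2-1)!=1$. Your concern about this last identification is well-placed but easily resolved: it is precisely the content of the paper's Theorem~\ref{BoundaryMap_AHSS_Kernel} specialized to $q=2$, so rather than reproving it you may simply invoke that statement (every permanent cycle in $H^4$ is $c_2$ of some bundle, hence zero by $C$-triviality). Amusingly, this is exactly the route the paper takes for the non-orientable $7$-manifold case (Theorem~\ref{7_manifold_non_orientable}), so your approach is arguably more in keeping with the paper's own toolkit; the Banica--Putinar argument buys a slightly more hands-on construction of the offending bundle, while your AHSS argument is cleaner and avoids the $Sq^2$ detour.
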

	In Example \ref{Orientable_5_7_Manifold_Example}, we provide oriented manifolds of dimensions $7$ which are $C$-trivial via the above classification.
	
	In the $7$-dimensional non-orientable case we prove the following. 
	
	\begin{Thm} \label{7_manifold_non_orientable} Let $X$ be a non-orientable $7$-manifold. Then $X$ is $C$-trivial if and only if $X$ has the homology
		$$
		H_i(X;\Z) = \begin{cases}
			\Z  & \text{if } i = 0\\
			\Z & \text{if } i = 1\\
			F & \text{if } i = 2\\
			\Z_2^r & \text{if } i = 3\\
			F' & \text{if } i = 4\\
			0 & \text{if } i = 5\\  
			\Z_2 & \text{if } i = 6\\
			0 & \text{if } i = 7
		\end{cases}
		$$
		where $F,F'$ are finite abelian groups, $\mathrm{Ext}(F,\mathbb Z_2)\cong \mathrm{Ext}(F',\mathbb Z_2)$ and either one of the following holds
		\begin{enumerate}
			\item [$(i)$]{$r=0$, or}
			\item [$(ii)$]{$r=1$ with $\mathrm{Sq^2\circ\rho_2}:H^4(X;\Z) \rightarrow H^6(X;\Z_2)$ being an injective map.}
		\end{enumerate}
	\end{Thm}
	
	{\em Conventions.} We follow the same conventions as in \cite{bhatta} and record them here for convenience. 
	Throughout $F,F',\ldots$ will denote finite abelian groups. Given a finite abelian group $F$ (respectively $F',\ldots$), 
	the integers $s$ (respectively, $s'\ldots$) will denote the number of primes $p_i$ that are equal to $2$ in a decomposition 
	$$F\cong \oplus \mathbb Z_{p_i}^{k_i}$$
	of $F$. 
	
	\section{Proof of general results}

	Before proving the main theorems, we introduce some notations and prove some preliminary results that we shall need. Throughout, $\rho_k$ will denote the homomorphism 
	$$\rho_k:H^k(X;\mathbb Z)\longrightarrow H^k(X;\mathbb Z_k)$$
	induced by the quotient map $\Z \to\Z_k$. We begin by some elementary, but important observations.
	\begin{Lem} \label{PreliminaryResults-1} Let $X, Y$ be two paracompact spaces. 
		\begin{enumerate}
			\item If $X$ is $C$-trivial, then $H^2(X;\Z) = 0$. Hence, $H_2(X;\Z)$ is a finite group and $H_1(X;\Z) \cong \Z^e$ for some $e \geq 0$. 
			\item If $\widetilde{K}(X)=0$, then $X$ is $C$-trivial.
			\item If $\widetilde{K}(X)=0=\widetilde{K}(Y)$ and $X\wedge Y$ is $C$-trivial, then $X\times Y$ is $C$-trivial. 
			\item If $n$ is odd and $X$ has trivial complex $K$-theory, that is $\widetilde{K}(\Sigma X)=0= \widetilde{K}(X)$, then $S^n\times X$ is $C$-trivial.
		\end{enumerate}
	\end{Lem}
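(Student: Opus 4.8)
The plan is to dispatch the four parts in the order listed, with the first two being essentially formal and the third carrying the real content.

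\emph{Parts (1) and (2).} For (1) I would invoke the classification of complex line bundles over a paracompact space $X$ by $H^2(X;\Z)$ via the first Chern class $c_1$: $C$-triviality forces $c_1$ of every line bundle to vanish, hence every line bundle is trivial, so the classifying bijection has image $\{0\}$, i.e.\ $H^2(X;\Z)=0$. The ``hence'' then comes from the universal coefficient sequence $0\to\mathrm{Ext}(H_1(X;\Z),\Z)\to H^2(X;\Z)\to\Hom(H_2(X;\Z),\Z)\to 0$: both outer groups vanish, and (using that the homology groups in question are finitely generated, as they are in every application, $X$ being a closed manifold) $\mathrm{Ext}(H_1,\Z)=0$ forces $H_1$ to be free while $\Hom(H_2,\Z)=0$ forces $H_2$ to be finite. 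For (2) the key point is that the total Chern class is natural and satisfies $c(\alpha\oplus\beta)=c(\alpha)c(\beta)$, so it is unchanged under stabilisation and therefore depends only on the class $[\alpha]\in\widetilde K(X)$; if $\widetilde K(X)=0$ every complex bundle $\alpha$ satisfies $\alpha\oplus\epsilon^k\cong\epsilon^m$ for trivial bundles, whence $c(\alpha)=c(\epsilon^m)=1$.

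\emph{Part (3).} This is the crux. Given a complex vector bundle $\alpha$ on $X\times Y$, I would use the cofibre sequence $X\vee Y\hookrightarrow X\times Y\xrightarrow{q}X\wedge Y$ and the resulting natural splitting $\widetilde K(X\times Y)\cong\widetilde K(X\wedge Y)\oplus\widetilde K(X)\oplus\widetilde K(Y)$, in which the first summand is $q^*\widetilde K(X\wedge Y)$ and the last two are detected by restriction to $X\times\{y_0\}$ and $\{x_0\}\times Y$. Since $\widetilde K(X)=\widetilde K(Y)=0$, the restrictions of $[\alpha]-\rk\alpha$ to $X$ and to $Y$ vanish, so $[\alpha]-\rk\alpha=q^*\xi$ for some $\xi\in\widetilde K(X\wedge Y)$. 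Writing $\xi=[\beta]-[\epsilon^n]$ for a genuine complex bundle $\beta$ on $X\wedge Y$ (legitimate because $X\wedge Y$ has the homotopy type of a finite complex), the identity $q^*\xi=[\alpha]-\rk\alpha$ says precisely that $q^*\beta$ and $\alpha$ become isomorphic after adding trivial bundles. By the stability observation from (2), $c(\alpha)=c(q^*\beta)=q^*c(\beta)$, and $c(\beta)=1$ since $X\wedge Y$ is $C$-trivial; hence $c(\alpha)=1$, so $X\times Y$ is $C$-trivial.

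\emph{Part (4) and the main difficulty.} For (4) I would simply apply (3) with $Y=S^n$: here $\widetilde K(S^n)=0$ because $n$ is odd, $\widetilde K(X)=0$ by hypothesis, and $X\wedge S^n=\Sigma^n X$ satisfies $\widetilde K(\Sigma^n X)\cong\widetilde K(\Sigma X)=0$ by Bott periodicity (since $n-1$ is even), so $\Sigma^n X$ is $C$-trivial by (2); part (3) then yields that $S^n\times X$ is $C$-trivial. The only genuine obstacle lies in (3): one must be careful that the $K$-theory product splitting is invoked correctly and, above all, that the $K$-theoretic equation $q^*\xi=[\alpha]-\rk\alpha$ can be upgraded to a stable isomorphism of actual bundles so that multiplicativity of the total Chern class applies — this is exactly where the vanishing of $\widetilde K(X)$ and $\widetilde K(Y)$ together with compactness (finiteness) are used. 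Everything else is bookkeeping.
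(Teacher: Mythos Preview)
Your proposal is correct and follows essentially the same route as the paper: parts (1) and (2) are dismissed as straightforward, part (3) is handled via the cofibre sequence $X\vee Y\to X\times Y\to X\wedge Y$ together with $\widetilde K(X\vee Y)=\widetilde K(X)\oplus\widetilde K(Y)=0$ to see that $q^*$ is surjective onto $\widetilde K(X\times Y)$, and part (4) is reduced to (3) via Bott periodicity applied to $\Sigma^n X$. Your write-up is in fact slightly more careful than the paper's in making explicit the passage from the $K$-theoretic identity $[\alpha]-\rk\alpha=q^*\xi$ to an actual stable isomorphism $\alpha\oplus\epsilon^k\cong q^*\beta\oplus\epsilon^m$ before invoking multiplicativity of the total Chern class.
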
 
	\begin{proof}
		The proofs of (1) and (2) are straightforward. To prove (3), we observe that in the exact sequence
		\begin{align*}
			\widetilde{K}(X\wedge Y) \overset{p^*}{\to} \widetilde{K}(X\times Y) \to \widetilde{K}(X\vee Y).
		\end{align*}
		$p^*$ is surjective as $\widetilde{K}(X\vee Y) = 0$. Consequently, every bundle on $X\times Y$ is stably isomorphic to the pullback of a bundle from $X\wedge Y$. The result then follows by naturality of Chern classes.\newline
		We now prove (4). As $\widetilde{K}(S^n) = 0$ for $n$ odd, by (3) it is sufficient to show that $\widetilde{K}(\Sigma^nX) =0$ for odd $n$. The result then follows by Bott periodicity, since we have $\widetilde{K}(\Sigma^nX) \simeq \widetilde{K}(\Sigma^{n-2}X) \simeq \dots  \simeq \widetilde{K}(\Sigma X)$. \end{proof}
	For a space $X$, let $d_i$ denote the $i^{\text{\rm th}}$-coboundary homomorphism of the Atiyah-Hirzebruch spectral sequence of complex $K$-theory of $X$. For easy reference, we state a result regarding these differentials that we will often use in our proofs. The references for this result are \cite[Theorem 3]{Buhstaber}, \cite[Remark 1.4]{Banica_2} and \cite[Theorem 1, pp 172]{griffiths}.
	\begin{Thm}\label{BoundaryMap_AHSS_Kernel}\label{ChernClassExistence_Griffiths} Let $X$ be a finite polyhedron and let $d_{2k+1}$ denote the (possible) non-trivial coboundaries in the Atiyah-Hirzebruch spectral sequence for $K(X)$, associated with the simplicial decomposition of $X$. Fix $q \ge 1$. If $\alpha \in H^{2q}(X;\Z)$ lies in the kernel of 
		\begin{align*}
			d_{2k+1} : H^{2q}(X;\Z) \to H^{2q+2k+1}(X;\Z)
		\end{align*}
		for all $k \ge 1$, then there exists a vector bundle $\xi$ over $X$ such that $c_q(\xi) = (q-1)!\alpha$.
		\newline
		On the other hand, if there exists a vector bundle $\xi \in K(X)$  which is trivial on $X^{2q-1}$, the $2q-1$ skeleton of $X$, and is such that 
		$$
		\ch(\xi) = \alpha + \text{higher order terms}.
		$$
		then there exists a cohomology class $\alpha$ in $H^{2q}(X;\Z)$ such that $d_{2k+1}(\alpha) = 0$ for all $k$. The map $\ch$ denotes the Chern character map.
	\end{Thm}
	
	We now provide the proofs of the two general results given in section 1.
	\begin{proof}[Proof of Theorem \ref{GR_Even_Cohomology_Finite}]
		Let $X$ be a closed $n$-manifold which is $C$-trivial. We prove the result by contradiction. If possible, let $\alpha \in H^{2q}(X;\Z)$ be an element of infinite order in $H^{2q}(X;\Z)$. We shall find a non-zero element of $H^{2q}(X;\Z)$ which is the $q^{th}$-Chern class of a complex vector bundle $\xi$ over $X$.\newline
		Let $2m+1$ be the largest odd integer such that $2q + 2m + 1 \le n$. For $1 \le k \le m$, $d_{2k+1}:H^{2q}(X;\Z) \rightarrow H^{2q+2k+1}(X;\Z)$ is an odd differential of the Atiyah-Hirzebruch spectral sequence.  It is well known that the image of the coboundary homomorphisms of the Atiyah-Hirzebruch spectral sequence is torsion-valued \cite{Buhstaber}. So, let $n_k$ be the smallest positive integer such that $n_kd_{2k+1}(x) = 0$ for all $x \in H^{2q}(X;\Z)$. In particular, $n_kd_{2k+1}(\alpha) = d_{2k+1}(n_k \alpha) = 0$ for all integers $k$, $1 \le k \le m$.\\ 
		Let $l = \prod_{k=1}^m{n_k}$. Then it is clear that $d_{2k+1}(l\alpha) = 0$ for all $1 \le k \le m$. For $k > m$, $2q + 2k+1 > n = \dim(X)$ and hence $d_{2k+1}(l \alpha)$ is zero in any case. So, the element $\mu = l \alpha$ is in $\Ker(d_{2k+1})$ for all positive integers $k$. It follows from Theorem \ref{BoundaryMap_AHSS_Kernel} that there exists a complex vector bundle $\xi$ such that $c_q(\xi) = (q-1)! l \alpha$ and since $\alpha$ is an element of infinite order, $c_q(\xi) \neq 0$. This contradicts the $C$-triviality of $X$ and completes the proof.
	\end{proof}
	\begin{Rem}
		A more direct proof of Theorem \ref{GR_Even_Cohomology_Finite} can be achieved by the Chern character isomorphism. There is an isomorphism
		\begin{align*}
			\ch : K(X)\tens \Q \to H^{\text{\rm ev}}(X;\Q)
		\end{align*}
		where $\ch_k : K(X)\tens \Q\to H^{2k}(X;\Q)$ is obtained by a homogeneous polynomial among Chern classes for $k\ge 1$. By $C$-triviality, $\ch_k = 0$ for $2 \le 2k < n$. As $\ch$ is an isomorphism, it follows that $H^{2k}(X;\Q) = 0$ for all $2 \le 2k < n$, as required.
	\end{Rem}
	\begin{proof}[Proof of Theorem \ref{GR_Odd_Dim_Manifolds}] We only prove (2), the proof of (1) follows a similar analysis. By Lemma \ref{PreliminaryResults-1}(1), $H_1(X;\Z) = \Z^e$ for some $e \ge 0$ and hence $H^1(X;\Z_2) = \Z_2^e$. By Theorem \ref{GR_Even_Cohomology_Finite}, $H_{n-1}(X;\Z) = F$ for some finite group $F$ and since $X$ is non-orientable, $H_{n-1}(X;\Z) = \Z_2$. If $H_{n-2}(X;\Z) = \Z^{e_1} \oplus F$, for some $e_1 \ge 0$ and some finite abelian group $F$, then $$H^{n-1}(X;\Z_2) = \Z_2 \oplus \Z_2^{e_1} \oplus \Z_2^{s}.$$
		Using Poincar\'e duality for $\Z_2$ coefficients, we get $H^1(X;\Z_2) = H^{n-1}(X;\Z_2)$, which implies 
		$$e = 1+e_1+s \ge 1.$$
		Hence,
		\begin{equation}
			\chi(X) = 1 - \rk(H^1(X;\Z)) + \sum_{\substack{i=1  \\ i \text{ even}}}^{n} (-1)^i \rk(H^{i}(X;\Z)) + \sum_{\substack{i=2 \\ i \text{ odd}}}^{n} (-1)^i \rk(H^{i}(X;\Z)).
		\end{equation}
		By Theorem \ref{GR_Even_Cohomology_Finite} the third term is $0$. So the expression reduces to
		\begin{equation}
			0 = \chi(X) = 1 - e  - \sum_{\substack{i=2 \\ i \text{ odd}}}^{n} \rk(H^{i}(X;\Z)).
		\end{equation}
		Hence, $e \le 1$, which combined with the earlier observation that $e \ge 1$, gives us $e = 1$. It also follows that $\rk(H^i(X;\Z)) = 0$ for all odd $i > 2$. This completes the proof of (2).
	\end{proof}
	
	\begin{Rem}\label{Remark_Odd_Orientable_Second_Last_Hom} Let $X$ be an orientable, $C$-trivial manifold of odd dimension, say $2k+1$. Since $H_{2k}(X;\Z)$ must be both torsion free and finite, it must be true that $H_{2k}(X;\Z) = 0$. Using $C$-triviality $H^2(X;\Z) = 0$ and hence, $H_{2k-1}(X;\Z) = 0$ by Poincar\'e duality. Hence, $H^{2k}(X;\Z) = 0$ and applying Poincar\'e duality we get that $H_1(X;\Z) = 0$. An interesting consequence of this is that an orientable $C$-trivial odd-dimensional manifold must have a perfect fundamental group.
	\end{Rem}

	\section{Calculations for low dimensional manifolds}
	In this section we prove Theorems \ref{5-Manifold-Orientable}-\ref{7_manifold_non_orientable}. We begin by classifying $C$-trivial manifolds in dimensions $3$ and $4$ as mentioned in the introduction.
	\begin{Thm} \label{3_Manifold}
		Let $X$ be a $3$-manifold. 
		\begin{enumerate}
			\item If $X$ is orientable, then $X$ is $C$-trivial if and only if $X$ is an integral homology $3$-sphere.
			\item If $X$ is non-orientable, then $X$ is $C$-trivial if and only if the integral homology groups of $X$ are of the form 
			$$H_i(X;\mathbb Z)=\left\{ \begin{array}{cl}
				\mathbb Z & i=0\\
				\mathbb Z & i=1\\
				\mathbb Z_2 & i=2\\
				0 & i=3.
			\end{array}
			\right. $$
		\end{enumerate}
	\end{Thm}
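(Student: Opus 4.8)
The plan is to handle the two cases separately, in each case using Poincar\'e duality together with the preliminary results of Section 2 to pin down the integral homology, and then using Lemma \ref{PreliminaryResults-1}(2) (via $\widetilde K(X)=0$) for the converse. For the orientable case, suppose first that $X$ is $C$-trivial. By Lemma \ref{PreliminaryResults-1}(1) we have $H^2(X;\Z)=0$, so $H_1(X;\Z)$ is free, say $\Z^d$, and $H_2(X;\Z)$ is finite. But $X$ is orientable and $3$-dimensional, so Poincar\'e duality gives $H_2(X;\Z)\cong H^1(X;\Z)\cong \Z^d$ (the cohomology $H^1$ is always free), forcing the finite group $H_2(X;\Z)$ to be zero, hence $d=0$. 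Then $H_1(X;\Z)=0$ and by universal coefficients $H^2(X;\Z)\cong \mathrm{Ext}(H_1(X;\Z),\Z)\oplus \Hom(H_2(X;\Z),\Z)=0$ (consistent), and $H_2(X;\Z)=0$ already; so $X$ is an integral homology $3$-sphere. Conversely, if $X$ is an integral homology $3$-sphere then $\widetilde K(X)\cong \widetilde K(S^3)=0$ (for instance because the Atiyah--Hirzebruch spectral sequence collapses with $\widetilde H^{\mathrm{ev}}(X;\Z)=0$), so $X$ is $C$-trivial by Lemma \ref{PreliminaryResults-1}(2).

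For the non-orientable case, suppose $X$ is $C$-trivial. By Lemma \ref{PreliminaryResults-1}(1), $H^2(X;\Z)=0$, $H_2(X;\Z)$ is finite, and $H_1(X;\Z)\cong \Z^d$. Since $X$ is a closed non-orientable $3$-manifold, $H_3(X;\Z)=0$ and (by the standard computation, e.g. via Poincar\'e--Lefschetz duality with twisted coefficients, or from $H^3(X;\Z)\cong H_0(X;\Z_{w})$) the top homology vanishes while the torsion appears one degree down: $H_2(X;\Z)\cong \Z_2$ together with a free part matching $H_1$. More carefully, I would argue as in the proof of Theorem \ref{GR_Odd_Dim_Manifolds}(2): $\Z_2$-Poincar\'e duality gives $H^1(X;\Z_2)\cong H^2(X;\Z_2)$; writing $H_2(X;\Z)=\Z^{d_1}\oplus F$ and using that $X$ non-orientable forces a $\Z_2$ summand in $H_2$, universal coefficients translate this into a dimension count for $H^2(X;\Z_2)$, while $H^1(X;\Z_2)\cong \Z_2^d$. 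Combined with the Euler characteristic identity $\chi(X)=0$ (odd dimension) this forces $d=1$ and $H_2(X;\Z)\cong\Z_2$; then $H^2(X;\Z)=\mathrm{Ext}(H_1,\Z)\oplus\Hom(H_2,\Z)=0$ is automatically satisfied, so there is no further constraint. Hence the homology has exactly the stated form.

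For the converse in the non-orientable case, I would again aim to show $\widetilde K(X)=0$ and invoke Lemma \ref{PreliminaryResults-1}(2). With $H^*(X;\Z)$ equal to $\Z,\Z,0,\Z_2$ in degrees $0,1,2,3$ (the last by universal coefficients from $H_2=\Z_2$, $H_3=0$), the Atiyah--Hirzebruch spectral sequence computing $\widetilde K(X)$ has $E_2$-page with $\widetilde H^{\mathrm{odd}}(X;\Z)=\Z\oplus\Z_2$ (degrees $1$ and $3$) and $\widetilde H^{\mathrm{ev}}(X;\Z)=0$. There is only one possibly nonzero differential, $d_3\colon H^0(X;\Z)\to H^3(X;\Z)=\Z_2$; since $d_3$ is a derivation vanishing on the unit it kills $H^0$, and in any case by Theorem \ref{BoundaryMap_AHSS_Kernel} the relevant kernels are everything in these low degrees. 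So $E_\infty=E_2$, and $\widetilde K(X)$ is an extension of $\Z_2$ by $\Z$ (sitting in $\widetilde K^{-1}$), with $\widetilde K^0(X)=0$; in particular every complex bundle on $X$ is stably trivial. But stably trivial bundles have trivial total Chern class, so $X$ is $C$-trivial. (Alternatively, one can split off the circle: such an $X$ is, up to the homological data, built from $S^1$ and a space with the $\Z_2$-cohomology of a Moore-type piece, and apply Lemma \ref{PreliminaryResults-1}(4); but the direct AHSS argument is cleanest.)

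The main obstacle is the non-orientable direct implication: extracting precisely $H_2(X;\Z)\cong\Z_2$ and $d=1$ from $C$-triviality requires carefully combining $\Z_2$-Poincar\'e duality, the universal coefficient theorem, and $\chi(X)=0$, exactly in the style of the proof of Theorem \ref{GR_Odd_Dim_Manifolds}(2) but now with the dimension low enough that one must also check no extra torsion can hide in $H_2$ — and for the converse, confirming that the lone potential $K$-theory differential and the extension problem genuinely force every bundle to be stably trivial rather than merely forcing $\widetilde H^{\mathrm{ev}}=0$.
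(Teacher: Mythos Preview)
Your argument is correct, and for the forward direction in both cases it essentially matches the paper: in (1) the paper also uses Lemma~\ref{PreliminaryResults-1}(1) together with Poincar\'e duality (they pair $H_1$ with $H^2$ rather than $H_2$ with $H^1$, but this is immaterial), and in (2) they invoke directly the standard fact that a closed non-orientable $n$-manifold has torsion subgroup of $H_{n-1}$ equal to $\Z_2$, so that $H_2(X;\Z)$ being finite forces $H_2(X;\Z)\cong\Z_2$, and then $\chi(X)=0$ gives $d=1$. Your detour through $\Z_2$-Poincar\'e duality and a dimension count reaches the same conclusion but is unnecessary once you have cited that standard fact.

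The genuine difference is in the converse. The paper does not compute $\widetilde K(X)$ at all: since $\dim X=3$, the only Chern class that can possibly be nonzero is $c_1\in H^2(X;\Z)$, so $H^2(X;\Z)=0$ is by itself both necessary and sufficient for $C$-triviality of a $3$-manifold. In each case the paper simply checks from the stated homology (via universal coefficients) that $H^2(X;\Z)=0$ and is done. Your route through the Atiyah--Hirzebruch spectral sequence and Lemma~\ref{PreliminaryResults-1}(2) is valid (indeed $\widetilde H^{\mathrm{ev}}(X;\Z)=0$ already forces $\widetilde K^0(X)=0$, so the discussion of $d_3$ and of extension problems is not even needed), but it is considerably heavier machinery than the situation requires.
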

	\begin{proof} We begin by proving (1). Using Lemma \ref{PreliminaryResults-1}, we know that $H_1(X;\Z) \simeq \Z^e$ for some $e\ge 0$ and $H_2(X; \Z) = F$ for some finite group $F$. By orientability and connectedness of $X$, we further conclude that $H_0(X; \Z) = \Z$ and $H_3(X; \Z) = \Z$. By Poincar\'e duality, we get that $H_1(X;\Z) \simeq H^2(X; \Z) = 0$. It follows that $e=0$. Since $X$ is orientable, we further get that $F = 0$ and thus $X$ is an integral homology $3$-sphere, as required. The converse is immediate. This proves (1). 
		\newline
		We now prove (2). Assume that $X$ is a $C$-trivial non-orientable closed $3$-manifold. Observe, $H_3(X;\Z) = 0$ and $H_2(X; \Z)$ has torsion subgroup as $\Z_2$. By Lemma \ref{PreliminaryResults-1}, it follows that $H_2(X; \Z) \simeq \Z_2$. Furthermore, by Lemma \ref{PreliminaryResults-1}, we have that $H_1(X; \Z) \simeq \Z^e$ for some $e \ge 0$.
		Since $\dim(X)$ is odd, $(1-e) = \chi(X) = 0$, which gives us the required homology.
		The converse follows from the fact that if the homology is as given then $H^2(X;\Z) = 0$, completing the proof.
	\end{proof}
	\begin{Rem}
		There  are examples of non-orientable $3$-manifolds whose integral homology groups are of the form described in the Theorem \ref{3_Manifold} (see \cite[Table 3, page 573]{burton}).    
	\end{Rem}
	Before proving theorem \ref{4_Manifold} and theorems \ref{5-Manifold-Orientable}-\ref{7_manifold_non_orientable} which give the homological classification of $C$-trivial manifolds of dimensions 4,5,6 and 7, we prove a preliminary result.
	\begin{Lem}\label{PreliminaryResults-2} Let $X$ be a $CW$-complex and assume that 
		$H^7(X;\mathbb Z)$ has no $2$-torsion.  
		\begin{enumerate}
			\item  If $\dim(X)\leq 7$ and $X$ is $C$-trivial then the composition 
			\begin{equation}\label{equation1}
				H^4(X;\Z) \xrightarrow{{\rho_2}} H^4(X;\Z_2) \xrightarrow{{\mathrm{Sq}^2}} H^6(X;\Z_2)
			\end{equation}
			must be injective.
			\item If $\dim(X) \leq 6$ and $X$ is $C$-trivial, then $H^2(X;\Z) = 0$ and $H^4(X;\Z) = 0$. 
		\end{enumerate}
	\end{Lem}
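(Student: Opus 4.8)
The plan is to derive both parts from a single mechanism: Theorem \ref{BoundaryMap_AHSS_Kernel} applied with $q=2$. That result says that if $\alpha\in H^4(X;\Z)$ lies in $\Ker(d_{2k+1})$ for every $k\ge 1$, where the $d_{2k+1}\colon H^4(X;\Z)\to H^{4+2k+1}(X;\Z)$ are the coboundaries of the complex $K$-theory Atiyah--Hirzebruch spectral sequence, then there is a complex vector bundle $\xi$ with $c_2(\xi)=(2-1)!\,\alpha=\alpha$; since $X$ is $C$-trivial this forces $\alpha=0$. So in each part the task is just to check that the relevant classes die under all the $d_{2k+1}$.

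I would do part (2) first, as it is the clean case. The vanishing $H^2(X;\Z)=0$ is Lemma \ref{PreliminaryResults-1}(1). For $H^4(X;\Z)=0$, note that when $\dim X\le 6$ every target $H^{4+2k+1}(X;\Z)$ with $k\ge 1$ lies in degree $\ge 7$ and hence is zero; thus \emph{every} $\alpha\in H^4(X;\Z)$ lies in $\Ker(d_{2k+1})$ for all $k$, and the mechanism above yields $\alpha=0$.

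For part (1) the bound is only $\dim X\le 7$, so the differentials $d_5,d_7,\dots$ out of $H^4(X;\Z)$ still vanish for degree reasons (their targets lie in degree $\ge 9$), but $d_3\colon H^4(X;\Z)\to H^7(X;\Z)$ must be dealt with directly. Here I would invoke the classical identification of the first $K$-theoretic differential, $d_3=\beta\circ\mathrm{Sq}^2\circ\rho_2$, where $\beta\colon H^6(X;\Z_2)\to H^7(X;\Z)$ is the integral Bockstein. Then for $\alpha\in H^4(X;\Z)$ with $\mathrm{Sq}^2(\rho_2(\alpha))=0$ we get $d_3(\alpha)=\beta(0)=0$, so $\alpha$ lies in every $\Ker(d_{2k+1})$ and the mechanism forces $\alpha=0$; hence $\mathrm{Sq}^2\circ\rho_2$ is injective on $H^4(X;\Z)$. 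The hypothesis that $H^7(X;\Z)$ has no $2$-torsion is what makes this clean: since $\mathrm{Im}(\beta)$ consists of classes of order dividing $2$, a $2$-torsion-free $H^7(X;\Z)$ forces $\beta=0$, hence $d_3\equiv 0$ on $H^4(X;\Z)$, and in fact one then obtains $H^4(X;\Z)=0$ outright.

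I do not expect a genuine obstacle. The only non-formal ingredients are that AHSS coboundaries are torsion-valued --- already used (via \cite{Buhstaber}) in the proof of Theorem \ref{GR_Even_Cohomology_Finite} --- and the standard formula $d_3=\beta\,\mathrm{Sq}^2\,\rho_2$ for the first differential in complex $K$-theory; everything else is degree bookkeeping together with the definition of $C$-triviality. The only step needing any care is correctly isolating $d_3$ as the sole surviving obstruction in dimension $7$ and recording how the $2$-torsion-freeness of $H^7(X;\Z)$ controls it.
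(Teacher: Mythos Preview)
Your argument is correct, but it is not the route the paper takes. The paper proves both parts using the realizability criterion of B\u{a}nic\u{a}--Putinar \cite[Corollary~2.2]{Banica_2}: a triple $(c_1,c_2,c_3)\in H^2\times H^4\times H^6$ occurs as the Chern classes of a rank~$3$ bundle over a complex of dimension $\le 7$ if and only if $\rho_2(c_3)=\rho_2(c_1c_2)+\mathrm{Sq}^2\rho_2(c_2)$ in $H^6(X;\Z_2)$. For (1), a nonzero $c_2\in\Ker(\mathrm{Sq}^2\circ\rho_2)$ makes $(0,c_2,0)$ realizable, contradicting $C$-triviality. For (2), the paper splits into the cases $\dim X<6$ (where the target $H^6(X;\Z_2)$ vanishes, so the composition is zero and (1) applies) and $\dim X=6$ (where surjectivity of $\rho_2\colon H^6(X;\Z)\to H^6(X;\Z_2)$ lets one choose $c_3$ to match $\mathrm{Sq}^2\rho_2(c_2)$ and again realize the triple).

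Your approach instead stays entirely inside the Atiyah--Hirzebruch framework already set up in the paper: you feed $q=2$ into Theorem~\ref{BoundaryMap_AHSS_Kernel} and use the standard identification $d_3=\beta\circ\mathrm{Sq}^2\circ\rho_2$. This has two advantages. First, part~(2) becomes a one-line degree argument with no case split and no appeal to the external B\u{a}nic\u{a}--Putinar theorem. Second, your observation that the $2$-torsion-freeness of $H^7(X;\Z)$ kills $\beta$ (hence $d_3$) yields the stronger conclusion $H^4(X;\Z)=0$ already under the hypotheses of part~(1); this in effect absorbs the later Lemma~\ref{Lemma-7-Dimension-Orientable} into the same argument. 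The paper's route, on the other hand, makes the obstruction concrete by exhibiting an explicit rank~$3$ bundle, and does not require knowing the formula for $d_3$. One small point: the identity $d_3=\beta\,\mathrm{Sq}^2\,\rho_2$ is not stated in the paper, so if you use this proof you should cite it (e.g.\ Atiyah--Hirzebruch).
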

	\begin{proof}
		We first prove (1). We note that, by \cite[Corollary\,2.2]{Banica_1}, a triple $(c_1,c_2,c_3)$ of cohomology classes 
		$$(c_1,c_2,c_3)\in H^2(X;\mathbb Z)\times H^4(X;\mathbb Z) \times H^6(X;\mathbb Z)$$
		are the Chern classes of a rank $3$ complex vector bundle $\alpha$ over $X$ if and only if 
		\begin{equation}\label{equation2}
			c_3\equiv c_1c_2+ Sq^2 c_2
		\end{equation}
		in $H^6(X;\mathbb Z_2)$. If the composition in (\ref{equation1}) has a non-zero element in its kernel, say $c_2$, then the 
		triple $(0,c_2,0)$ satisfies (\ref{equation2}) and hence there is a rank $3$ complex vector bundle $\alpha$ over $X$ with $c_2(\alpha)=c_2\neq 0$ and hence $X$ is not $C$-trivial. This proves (1). 
		\newline
		We next prove (2). Assume that $\dim(X)\leq 6$ and that $X$ is $C$-trivial. We first observe that there is an exact sequence 
		$$\dots \longrightarrow H^5(X;\mathbb Z_2)\longrightarrow H^6(X;\mathbb Z)\longrightarrow H^6(X ; \mathbb Z) \stackrel{\rho_2}\longrightarrow H^6(X;\mathbb Z_2)\longrightarrow H^7(X;\mathbb Z)\longrightarrow 0$$
		and hence the homomorphism $\rho_2$ is an surjective. We now consider two cases. In the case that 
		$\dim(X)<6$, the composition $Sq^2\circ \rho_2$ of (\ref{equation1}) is the zero homomorphism and it will have a non-trivial kernel if $H^4(X;\mathbb Z)\neq 0$. Then by (1), $X$ cannot be $C$-trivial which is a contradiction. If $\dim(X)=6$, we assume 
		$H^4(X;\mathbb Z)\neq 0$ and derive a contradiction. Let $c_2\in H^4(X;\mathbb Z)$ be a non-zero element. As the morphism
		$$\rho_2:H^6(X;\mathbb Z)\longrightarrow H^6(X;\mathbb Z_2)$$
		is now surjective, we find a $c_3\in H^6(X;\mathbb Z)$ with 
		$$Sq^2\circ \rho_2(c_2)=\rho_2(c_3).$$
		The triple $(0,c_2,c_3)$ now satisfies the equation (\ref{equation2}). Hence there is a rank $3$ complex vector bundle $\alpha$ over $X$ with $c_2(\alpha)=c_2$, $c_3(\alpha)=c_3$. This contradiction proves (2) 
		and completes the proof of the lemma. 
	\end{proof}
	
	\begin{Rem}
		It follows that if $X$ is a $C$-trivial $CW$-complex of dimension at most $6$, then $H_i(X;\mathbb Z)$ is a finite abelian group for $i=2,4$ and is a torsion free abelian group for $i=1,3$.
	\end{Rem}
	
	\begin{Thm}\label{4_Manifold}
		Let $X$ be a $4$-manifold. Then $X$ is not $C$-trivial.
	\end{Thm}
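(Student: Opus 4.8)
The plan is to split into the orientable and non-orientable cases and in each case extract enough homological information to either contradict a known restriction or directly exhibit a nontrivial Chern class. For the orientable case there is nothing to do: an orientable closed $4$-manifold is even-dimensional, so the Bott-integrality argument recalled in the introduction (a degree-one map $X\to S^4$ together with Bott's divisibility theorem for $c_2$ of bundles over $S^4$) already shows $X$ is not $C$-trivial. So the content is entirely in the non-orientable case, and I would say this explicitly at the start.

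For a non-orientable closed $4$-manifold $X$, first I would assume $X$ is $C$-trivial and assemble the homology. By Lemma \ref{PreliminaryResults-1}(1) we have $H_1(X;\Z)\cong\Z^d$ and $H_2(X;\Z)$ finite; since $X$ is closed and connected, $H_0=\Z$, and since $X$ is non-orientable, $H_4(X;\Z)=0$ and the top $\Z_2$-homology forces $H_3(X;\Z)$ to have torsion subgroup exactly $\Z_2$, while by Poincar\'e duality with $\Z_2$ coefficients $H_3(X;\Z_2)\cong H^1(X;\Z_2)\cong\Z_2^d$. Combining with the universal coefficient theorem one pins down $H_3(X;\Z)\cong\Z^{d-1}\oplus\Z_2$ (so in particular $d\ge 1$), and the Euler characteristic of a closed odd-... — here $\dim X=4$ is even, so instead I would use $\chi(X)=2-2d+\operatorname{rk}H_2$; since $H_2$ is finite, $\chi(X)=2-2d$. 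Now I'd get a contradiction from dimension $4$ being low: Lemma \ref{PreliminaryResults-2}(2) does not apply directly (it needs the no-$2$-torsion-in-$H^7$ hypothesis, which is automatic here since $H^7(X;\Z)=0$), so in fact it does apply, giving $H^2(X;\Z)=0$ and $H^4(X;\Z)=0$; but $H^4(X;\Z)\cong H^4(X;\Z)$ — I need to compare with $H_0$ via duality. Poincar\'e duality for the non-orientable $X$ with $\Z_2$ coefficients gives $H^4(X;\Z_2)\cong H_0(X;\Z_2)\cong\Z_2\ne 0$, yet from $H^4(X;\Z)=0$ and $H^5(X;\Z)=0$ the universal coefficient / Bockstein sequence forces $H^4(X;\Z_2)\cong H^4(X;\Z)\otimes\Z_2\,\oplus\,\mathrm{Tor}(H^5(X;\Z),\Z_2)=0$, a contradiction. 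Hence no non-orientable $4$-manifold is $C$-trivial either.

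The main obstacle is making sure the hypothesis of Lemma \ref{PreliminaryResults-2} is genuinely satisfied (it is, since $H^7=0$ trivially has no $2$-torsion) and then correctly running the $\Z_2$-coefficient universal coefficient theorem: I must track both the free-quotient term $H^4(X;\Z)\otimes\Z_2$ and the $\mathrm{Tor}(H^5(X;\Z),\Z_2)$ term, and use $H^5(X;\Z)=0$ (which follows from $\dim X=4$) to kill the latter. Once both vanish we contradict $H^4(X;\Z_2)\ne 0$ coming from $\Z_2$-Poincar\'e duality, which holds for every closed manifold regardless of orientability. If one prefers a more self-contained route avoiding Lemma \ref{PreliminaryResults-2}, the alternative is to use Lemma \ref{PreliminaryResults-2}(1): the composite $H^4(X;\Z_2)\xrightarrow{\mathrm{Sq}^2}H^6(X;\Z_2)$ is automatically zero when $\dim X=4$, so injectivity fails as soon as $H^4(X;\Z_2)\ne0$, and the argument via $c_2$ of a rank-$3$ bundle from \cite{Banica_2} produces a bundle with $c_2\ne 0$. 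Either way the crux is the nonvanishing of $H^4(X;\Z_2)$, which is exactly the obstruction the even-dimensional Bott argument also exploits.
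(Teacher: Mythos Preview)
Your proof is correct and follows the same route as the paper: dispose of the orientable case by the Bott/degree-one-map argument, then in the non-orientable case invoke Lemma~\ref{PreliminaryResults-2}(2) (whose $H^7$ hypothesis is vacuous here) to force $H^4(X;\Z)=0$ and derive a contradiction. The only difference is cosmetic: the paper contradicts this directly against the standard fact $H^4(X;\Z)\cong\Z_2$ for a closed non-orientable $4$-manifold, whereas you pass through $\Z_2$ coefficients (Bockstein plus $\Z_2$-Poincar\'e duality) to reach the same contradiction; your detour through the full homology table and the Euler characteristic is unnecessary and can be dropped.
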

	\begin{proof} As remarked in the initial discussion, orientable manifolds of even dimension $n$ always admits a complex vector bundle $\alpha$ of rank $n/2$ with $c_{n/2}(\alpha) \neq 0$. So we assume $X$ is non-orientable. By Lemma \ref{PreliminaryResults-2}, we have $H^4(X;\Z) = 0$. This is a contradiction as we must have $H^4(X;\Z) = \Z_2$.
	\end{proof}
	\begin{proof}[Proof of Theorem \ref{5-Manifold-Orientable}]  Let $X$ be a $C$-trivial orientable $5$-manifold. Observe the following
		\begin{itemize}
			\item[(i)] By Theorem \ref{GR_Odd_Dim_Manifolds}, $H_i(X;\Z)$ is finite for all $0<i<5$.
			\item[(ii)] Since $H_4(X;\Z)$ is finite and torsion free, we have that $H_4(X;\Z) = 0$ and hence $H^1(X;\Z) = 0$.
			\item[(ii)] As $X$ is $C$-trivial, $H_1(X;\Z)$ is torsion free and by Theorem \ref{GR_Odd_Dim_Manifolds} it is finite and hence $H_1(X;\Z) = 0$.
			\item[(iii)] By Lemma \ref{PreliminaryResults-2}, $H_3(X;\Z)$ is free abelian and we know it must also be finite. Hence $H_3(X;\Z) = 0$. 
		\end{itemize}
		Using the above observations the homology groups of $X$ are as given in the statement of Theorem \ref{5-Manifold-Orientable}. Conversely, let $X$ have the homology given in the statement of Theorem \ref{5-Manifold-Orientable}. Then it is clear that $X$ is $C$-trivial as the cohomology groups of even degree are zero. This completes the proof of the theorem.
	\end{proof}
	The above result shows that if $X$ is a closed orientable 5-dimensional manifold which is $C$-trivial then $\pi_1(X)$ is perfect and $X$ is a rational homology sphere.
	\begin{proof}[Proof of Theorem \ref{5-Manifold-Non-Orientable}] Let $X$ be a $C$-trivial non-orientable $5$-manifold. By Theorem \ref{GR_Odd_Dim_Manifolds}, $H_1(X;Z) = \Z$ and $H_i(X;\Z)$ is finite for all $0<i<5$. Combining this finiteness condition with the fact that $X$ is non-orientable, we get $H_4(X;\Z) = \Z_2$. Finally, by Lemma \ref{PreliminaryResults-2}, $H_3(X;\Z)$ cannot have any torsion, and since it must also be finite, so $H_3(X;\Z) = 0$.
		Using the above observations, we deduce that $X$ has homology groups as given in the statement of Theorem \ref{5-Manifold-Non-Orientable}. Conversely, let the homology groups of $X$ be as given in the statement of Theorem \ref{5-Manifold-Non-Orientable}. Then it is immediate that $X$ is $C$-trivial as the cohomology groups of even degree are zero. This completes the proof of the theorem.
	\end{proof}
	\begin{proof}[Proof of Theorem \ref{6-Manifold-NonOrientable}] We first prove the necessary conditions for $X$ to be $C$-trivial. Observe the following:
		\begin{itemize}
			\item[(i)] Since $X$ is non-orientable, $H_6(X;\Z) = 0$ and $H_5(X; \Z) = \Z^{e_5} \oplus \Z_2$ as its torsion subgroup is $\Z_2$.
			\item[(ii)] By Lemma \ref{PreliminaryResults-2}, $H_2(X;\Z)$ and $H_4(X;\Z)$ are finite abelian groups, say $F$ and $F'$ respectively. Additionally, $H_1(X;\Z)$ and $H_3(X;\Z)$ are free abelian groups, say $\Z^{e_1}$ and $\Z^{e_3}$ for some $e_1,e_3 \ge 0$.
		\end{itemize}
		Using the above observations we deduce that $X$ must have the following homology
		\newline
		$$
		H_i(X;\Z) = \begin{cases}
			\Z  & \text{if } i = 0\\
			\Z^{e_1} & \text{if } i = 1\\
			F & \text{if } i = 2\\
			\Z^{e_3}& \text{if } i = 3\\
			F' & \text{if } i = 4\\
			\Z^{e_5} \oplus \Z_2 & \text{if } i = 5\\  
			0 & \text{if } i = 6.
		\end{cases}
		$$
		\newline
		Computing the homology with $\Z_2$ coefficients and using Poincar\'e duality we get the additional conditions
		$$d_1 \ge 1 \text{ and } \mathrm{Ext}(F,\Z_2) = \mathrm{Ext}(F',\Z_2),$$
		as required. We now prove the converse. Though the above conditions seem much more lax than the previous theorems, it turns out that these are sufficient. To see this, let $X$ be a manifold with the homology as given above. It is clear that $H^2(X;\Z) = H^4(X,\Z) = 0$ and hence for any complex vector bundle $\xi$ of rank less than or equal to two, $c(\xi) = 1$.
		\newline
		Next, let $\xi$ be a complex vector bundle of rank $3$ over $X$ with the Chern classes, $(c_1,c_2,c_3) \in H^2(X;\Z) \times H^4(X;\Z) \times H^6(X;\Z)$. By \cite[Corollary 2.2, Page 276]{Banica_1}, it follows that
		$$c_3 \equiv c_1c_2 + Sq^2c_2 \text{ in } H^6(X;\Z_2).$$
		Since, $c_1 = c_2 = 0$, we have
		$$c_3 \equiv 0 \text{ in } H^6(X;\Z_2).$$ 
		\newline
		The short exact sequence
		$$
		0\to Z\overset{\times 2}{\to} \Z \to \Z_2\to 0
		$$
		gives rise to the long exact sequence in cohomology
		$$\hdots \rightarrow H^5(X;\Z_2) \rightarrow H^6(X;\Z) \rightarrow H^6(X;\Z) \xrightarrow{\rho_2} H^6(X;\Z_2) \rightarrow H^7(X;\Z) \rightarrow \hdots.$$
		Since $H^7(X;\Z) = 0$, $$\rho_2 : \Z_2 \simeq H^6(X;\Z) \rightarrow H^6(X;\Z_2) \simeq \Z_2$$ is an isomorphism. Observe that $c_3 \equiv 0$ in $H^6(X;\Z_2)$ implies that $\rho_2(c_3) = 0$ in $H^6(X;\Z_2)$ and since $\rho_2$ is an isomorphism $c_3 = 0$ in $H^6(X;\Z)$. So, the only option for the Chern classes is $(c_1,c_2,c_3) = (0,0,0)$ and hence $c(\xi) = 1$
		\newline
		Finally, if $\xi$ is a complex vector bundle of rank $k$ greater than or equal to  $4$ over $X$. Then by \cite[Proposition 1.1, Chapter 9]{hus}, we must have $\alpha = \eta \oplus \epsilon$ for some complex vector bundle $\eta$ of rank $3$ and $\epsilon$ a trivial complex bundle. But for $i=1,2,3$, $c_i(\alpha) = c_i(\eta) = 0$ as $\eta$ is a vector bundle of dimension $3$ over $X$. Hence $c(\alpha)=1$. This completes the proof.
	\end{proof}
	We now prove a result which will be used in the proof of Theorem \ref{7_manifold_orientable}.
	\begin{Lem} \label{Lemma-7-Dimension-Orientable}
		Let $X$ be an orientable $7$-manifold. If $X$ is $C$-trivial, then $H^4(X;\mathbb Z)=0$ and $H_6(X;\Z)=0$. 
	\end{Lem}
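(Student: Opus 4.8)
The plan is to obtain both conclusions from Poincar\'e duality combined with the finiteness statements already proved, the one genuinely new ingredient being Lemma~\ref{PreliminaryResults-2}(1).

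\emph{Step 1 (the low-degree homology vanishes).} Since $X$ is closed and orientable, Poincar\'e duality gives $H_6(X;\Z)\cong H^1(X;\Z)$. By the universal coefficient theorem $H^1(X;\Z)\cong\Hom(H_1(X;\Z),\Z)$ is free abelian, while Theorem~\ref{GR_Odd_Dim_Manifolds}(1) says it is finite; hence $H^1(X;\Z)=0$ and therefore $H_6(X;\Z)=0$. Moreover $H_1(X;\Z)\cong\Z^d$ by Lemma~\ref{PreliminaryResults-1}(1), and $\Z^d\cong\Hom(H_1(X;\Z),\Z)=H^1(X;\Z)=0$ forces $d=0$, so $H_1(X;\Z)=0$ as well; I record this for use below.

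\emph{Step 2 (reduce $H^4(X;\Z)=0$ to a $\Z_2$-cohomology computation).} I would invoke Lemma~\ref{PreliminaryResults-2}(1). Its hypothesis is satisfied because $X$ is orientable, so Poincar\'e duality gives $H^7(X;\Z)\cong H_0(X;\Z)\cong\Z$, which has no $2$-torsion. Hence the composition
\[
H^4(X;\Z)\xrightarrow{\rho_2}H^4(X;\Z_2)\xrightarrow{\mathrm{Sq}^2}H^6(X;\Z_2)
\]
is injective, and it therefore suffices to show its target vanishes.

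\emph{Step 3 (compute $H^6(X;\Z_2)$ and conclude).} By Poincar\'e duality with $\Z_2$-coefficients (which does not need orientability), $H^6(X;\Z_2)\cong H_1(X;\Z_2)$, and by the universal coefficient theorem $H_1(X;\Z_2)\cong H_1(X;\Z)\tens\Z_2$, which is $0$ by Step~1. So $H^6(X;\Z_2)=0$, and an injective homomorphism out of $H^4(X;\Z)$ into the zero group forces $H^4(X;\Z)=0$.

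The argument is essentially bookkeeping once Lemma~\ref{PreliminaryResults-2}(1) is in hand; the only points that need care are verifying its $2$-torsion hypothesis—this is exactly where orientability enters, through $H^7(X;\Z)\cong\Z$—and keeping straight which Poincar\'e duality is being used, the integral one for $H_6(X;\Z)$ and the mod~$2$ one for $H^6(X;\Z_2)$.
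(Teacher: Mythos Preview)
Your proof is correct. Both your argument and the paper's rely on Lemma~\ref{PreliminaryResults-2} together with the fact that orientability gives $H^7(X;\Z)\cong\Z$; for $H_6(X;\Z)=0$ the two arguments are essentially identical (finite plus torsion-free). For $H^4(X;\Z)=0$ there is a small but genuine difference: you invoke Lemma~\ref{PreliminaryResults-2}(1) as a black box and then show that its target $H^6(X;\Z_2)$ vanishes, by first deducing $H_1(X;\Z)=0$ and then applying mod-$2$ Poincar\'e duality. The paper instead reruns the proof of Lemma~\ref{PreliminaryResults-2}(2): it uses only that $\rho_2\colon H^6(X;\Z)\to H^6(X;\Z_2)$ is surjective (from the Bockstein sequence, since $H^7(X;\Z)\cong\Z$ has no $2$-torsion) to lift $\mathrm{Sq}^2\rho_2(c_2)$ to an integral class $c_3$ and then realizes $(0,c_2,c_3)$ as Chern classes of a rank-$3$ bundle. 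Your route is a bit cleaner in that it quotes the lemma rather than reopening its proof, at the cost of the extra input $H_1(X;\Z)=0$; the paper's route does not need $H^6(X;\Z_2)$ to vanish, only $\rho_2$ to be onto.
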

	\begin{proof}
		As $X$ is orientable, we have that $H^7(X;\mathbb Z)\cong \mathbb Z$ has no $2$-torsion. Also, 
		the homomorphism 
		$$\rho_2:H^6(X;\mathbb Z)\longrightarrow H^6(X;\mathbb Z_2)$$ 
		is an epimorphism. We may now argue as in the proof of Lemma \ref{PreliminaryResults-2}(2) to conclude that 
		$H^4(X;\mathbb Z)=0$. Since $X$ is of odd dimension, it follows from Theorem \ref{GR_Odd_Dim_Manifolds} that $H_6(X;\Z)$ must be a finite group. But orientability of $X$ implies that $H_6(X;\Z)$ cannot have any torsion as well. Hence, $H_6(X;\Z) = 0$. This completes the proof.
	\end{proof}
	
	\begin{proof}[Proof of Theorem \ref{7_manifold_orientable}] Let $X$ be a $7$-dimension orientable $C$-trivial manifold. We make the following observations
		\begin{itemize}
			\item[(i)] By Theorem \ref{GR_Odd_Dim_Manifolds}, $H_7(X;\Z) = \Z$ and $H_i(X;\Z)$ is finite for $1 \le i \le 6$.
			\item[(ii)] By Lemma \ref{PreliminaryResults-1} (1), $H^2(X;\Z) = 0$. Hence, $H_2(X;\Z) = F$ and $H_1(X;\Z) = \Z^{e_1}$ for some $e_1 \ge 0$. But by Theorem \ref{GR_Odd_Dim_Manifolds}, $H_1(X;\Z)$ is a finite group and hence $H_1(X;\Z)= 0$.
			\item[(iii)] By Lemma \ref{Lemma-7-Dimension-Orientable}, $H^4(X;\Z) = 0$, and as a result, $H_4(X;\Z) = F'$ and $H_3(X;\Z) = \Z^{e_3}$ for some $e_3 \ge 0$. Once again, by Theorem \ref{GR_Odd_Dim_Manifolds} $H_3(X;\Z)$ is a finite group which means $H_3(X;\Z)= 0$.
		\end{itemize}
		Using the above observations, the homology groups are as follows
		$$
		H_i(X;\Z) = \begin{cases}
			\Z  & \text{if } i = 0\\
			0 & \text{if } i = 1\\
			F & \text{if } i = 2\\
			0 & \text{if } i = 3\\
			F' & \text{if } i = 4\\
			F'' & \text{if } i = 5\\  
			0 & \text{if } i = 6\\
			\Z & \text{if } i = 7.
		\end{cases}
		$$
		By Poincar\'e duality applied on integral homology and cohomology, we further deduce that $F'' = 0$ and $F'=F$, as required. The converse is immediate. This completes the proof.
	\end{proof}
	\begin{Exm} \label{Orientable_5_7_Manifold_Example}
		Examples of manifolds having homology groups as in Theorems  \ref{5-Manifold-Orientable} and \ref{7_manifold_orientable} are provided in \cite[Example 7, page 232]{ruberman}. There, the author constructs manifolds in all dimension greater than $5$, in which the only homology (apart from dimensions $0$  and $n$) is $\Z_k$ in dimensions $2$ and $n-3$. In particular, the author provides a simply connected $7$-manifold which has integral homology groups as $\Z$ in degrees $0$ and $7$ and $\Z_k$ in degree $4$ and $2$. This has the form given in Theorem \ref{7_manifold_orientable} and hence is $C$-trivial. Additionally, the author constructs a simply connected $5$-manifold which has integral homology groups as $\Z$ in degrees $0$ and $5$ and $\Z_k \oplus \Z_k$ in degree $2$. This has the form given in Theorem \ref{5-Manifold-Orientable} and hence is $C$-trivial.
	\end{Exm}
	
	\begin{Lem} \label{Lemma-7-Dimension-Non-Orientable}
		Let $X$ be a finite CW-complex of dimension $n>3$ with $H^2(X;\Z) = 0$ and let $\xi$ be a complex vector bundle  over $X$. Then $\xi$ restricted to $X^3$ is trivial, where $X^3$ is the $3$-skeleton of $X$. 
	\end{Lem}
	\begin{proof} Note that if $H^2(X;\Z) = 0$, then $H^2(X^3;\Z)=0$. Suppose that $\rk(\xi) = 1$. As first Chern class establishes an isomorphism between isomorphism classes of line bundles on $X$ and $H^2(X;\Z)$, we immediately deduce that $\xi$ must be trivial. Now, consider $\rk(\xi) = k>1$. The restriction $\xi|_{X_3}$ satisfies $3 \le 2k-1$ for all $k > 1$. It follows by \cite[Proposition 1.1, Pg 111]{hus} that $\xi|_{X^3} \simeq \eta \oplus \epsilon^1$ for some complex vector bundle $\eta$ of rank $k-1$ over $X^3$. If $k = 2$, then we stop, else applying the same result again we get $\xi|_{X^3} \simeq \eta' \oplus \epsilon^2$, where $\eta'$ is a vector bundle of rank $k-2$ over $X^3$. The process stops when we can write $\xi|_{X^3} \simeq \eta \oplus \epsilon^{k-1}$ with $\eta$ a complex line bundle over $X^3$. Since $\eta$ is a line bundle over $X^3$ it must be trivial. This completes the proof.
	\end{proof}
	\begin{proof}[Proof of Theorem \ref{7_manifold_non_orientable}] We first show that the homology groups of a $C$-trivial non-orientable $7$-manifold must satisfy the criteria given in the theorem. By Theorem \ref{GR_Odd_Dim_Manifolds}, we already know that 
		\begin{align*}
			H_1(X;\Z) = \Z,\; H_6(X;\Z) = \Z_2, \;H_7(X;\Z) = 0 
		\end{align*}
		and all other homology groups are finite. Consider the third differential of the Atiyah-Hirzebruch spectral sequence
		\begin{align*}
			d_3:H^4(X;\Z) \rightarrow H^7(X;\Z) = \Z_2.
		\end{align*}
		Note that if $\alpha \in \Ker(d_3)$, then $\alpha \in \Ker(d_{2k+1})$ for all $k \ge 1$. Additionally, by Theorem \ref{BoundaryMap_AHSS_Kernel}, if $\alpha \in \Ker(d_{2k+1})$ for all $k \ge 1$, then there exists $\xi$ such that $c_2(\xi) = \alpha$. It follows by $C$-triviality that $d_3$ must be injective. Consequently, 
		\begin{align*}
			H^4(X;\Z) = \Z_2^r,\;r=0,1
		\end{align*}
		and hence $H_3(X;\Z) = \Z_2^r$. Consider the odd differential $d_{2k+1}$ starting at $H^6(X;\Z)$ for $k\ge 1$. As this differential is 0, $\Ker{d_{2k+1}} = H^6(X;\Z)$ for all $k\ge 1$.
		It follows from Theorem \ref{BoundaryMap_AHSS_Kernel} that for all $x \in H^6(X;\Z)$, we must have $2 x = 0$. It then follows that $H_5(X;\Z) = \Z_2^{t_5}$ for some $t_5 \ge 0$.
		\newline
		Combining all these observations we have the tentative integral and mod 2 homology as follows
		$$
		H_i(X;\Z) = \begin{cases}
			\Z  & \text{if } i = 0\\
			\Z & \text{if } i = 1\\
			F & \text{if } i = 2\\
			\Z_2^{r}, \text{ } r=0,1 & \text{if } i = 3\\
			F' & \text{if } i = 4\\
			\Z_2^{t_5} & \text{if } i = 5\\  
			\Z_2 & \text{if } i = 6\\
			0 & \text{if } i = 7
		\end{cases}
		\; \&\; H^i(X;\Z_2) = \begin{cases}
			\Z_2  & \text{if } i = 0\\
			\Z_2 & \text{if } i = 1\\
			\Z_2^{s} & \text{if } i = 2\\
			\Z_2^{r} \oplus \Z_2^{s}, \text{ } r = 0, 1 & \text{if } i = 3\\
			\Z_2^{s'} \oplus \Z_2^{r} \text{ } r = 0, 1 & \text{if } i = 4\\
			\Z_2^{t_5} \oplus \Z_2^{s'} & \text{if } i = 5\\  
			\Z_2 \oplus \Z_2^{t_5} & \text{if } i = 6\\
			\Z_2 & \text{if } i = 7.
		\end{cases}
		$$
		Applying mod 2 Poincar\'e duality, we deduce that $t_5=0$ and $s = s'$. This shows that the integral homology has the form
		$$
		H_i(X;\Z) = \begin{cases}
			\Z  & \text{if } i = 0\\
			\Z & \text{if } i = 1\\
			F & \text{if } i = 2\\
			\Z_2^r & \text{if } i = 3\\
			F' & \text{if } i = 4\\
			0 & \text{if } i = 5\\  
			\Z_2 & \text{if } i = 6\\
			0 & \text{if } i = 7
		\end{cases}
		$$
		with $r=0,1$.
		If $r=1$ and $X$ is $C$-trivial, then we know that the map 
		$$d_3 : \Z_2 = H^4(X;\Z) \rightarrow H^7(X;\Z) = \Z_2$$ must be injective and hence an isomorphism. Recall that $d_3$ can be given by the following composition $$ H^4(X;\Z) \xrightarrow[]{\rho_2} H^4(X;\Z_2) \xrightarrow[]{\mathrm{Sq^2}} H^6(X;\Z_2) \xrightarrow[]{\tilde{\beta}} H^7(X;\Z)$$
		where $\tilde{\beta}$ is the connecting homomorphism of the long exact sequence induced by 
		$$0 \rightarrow \Z \overset{\times 2}{\rightarrow} \Z \rightarrow \Z_2 \rightarrow 0.$$
		The long exact sequence tells us that that $\tilde{\beta}:H^6(X;\Z_2) \rightarrow H^7(X;\Z)$ is an isomorphism. As $H^4(X;\Z) = H^6(X;\Z_2) = \Z_2$, it follows that $\mathrm{Sq}^2\circ\rho_2$ must be injective and hence the identity map. This proves the forward direction.
		\newline
		Conversely, if $X$ has the given homology, then $H^2(X;\Z) = H^6(X;\Z) = 0$ is immediate. So the only non-zero Chern classes that can exist must be in $H^4(X;\Z) = \Z_2^r$. We now complete the proof in each of the two cases on $r$. If $(i)$ is true then $r=0$ and there are no Chern classes in $H^4(X;\Z)$ as well, making $X$ a $C$-trivial manifold. On the other hand, if $(ii)$ is true and hence $r=1$ together with the composition $\mathrm{Sq}^2 \circ \rho_2$ being the identity map, then $d_3 : H^4(X;\Z) \rightarrow H^7(X;\Z)$ is the identity map. Now, let $\xi$ be any complex vector bundle over $X$, such that $c_2(\xi) = \alpha \in H^4(X;\Z)$ and since $c_i(\xi) = 0$ for all $i \neq 2$, we get that $\ch(\xi) = \alpha$. Observe that $\mathrm{dim}(X) = 7 \ge 3$ and $H^2(X;\Z) = 0$, so we can apply Lemma \ref{Lemma-7-Dimension-Non-Orientable} to conclude that $\xi|_{X^3}$ is trivial. Now, we can apply Theorem \ref{ChernClassExistence_Griffiths} to deduce that $d_3(\alpha) = 0$. But since $d_3$ is identity, hence $\alpha = 0$, showing that $X$ is $C$-trivial. This completes the proof.
	\end{proof}
	
	\section{Applications of the results}
	In this section we give some applications of the homological classification obtained for low dimensional $C$-trivial manifolds.
	\begin{Thm}
		Let $X$ be an orientable, $C$-trivial odd dimensional manifold. There are no manifolds $Y$ and $Z$ of positive dimension for which $X = Y \times Z$
	\end{Thm}
	\begin{proof}
		Since $X$ is odd dimensional and orientable then at least one among $Y$ and $Z$ must be even dimensional and orientable, hence not $C$-trivial. The result then follows.
	\end{proof}
	
	Recall that $M\# N$ denotes the connected sum of $M$ and $N$. In the following result, we are concerned with the problem of relating $C$-triviality of $M\#N$ to that of $M$ and $N$.
	\begin{Thm} Let $X = M \# N$ be the connected sum of two compact $n$-manifolds.
		\begin{itemize}
			\item[(1)] If $n$ is odd, and $M, N$ both are non-orientable then $X$ cannot be $C$-trivial.
			\item[(2)] If $n=3,5,7$ and $X$ is orientable, then $X$ is $C$-trivial iff $M$ and $N$ are $C$-trivial.
			\item[(3)] If $n=3,5$ and $X$ is non-orientable, then $X$ is $C$-trivial iff $M,N$ are both $C$-trivial and exactly one of $M$ or $N$ is orientable and the other is non-orientable.
		\end{itemize}
	\end{Thm}
	\begin{proof} We first begin by proving (1). If possible, let $X$ be $C$-trivial. For an odd dimensional non-orientable manifold $H_{n-1}(X;\Z) = \Z_2 \oplus \Z^e$ and by Theorem \ref{GR_Odd_Dim_Manifolds} $H_{n-1}(X;\Z)$ is finite and hence, $H_{n-1}(X;\Z) = \Z_2$. However, since $M$ and $N$ are both non-orientable $H_{n-1}(M \# N;\Z)$ must contain $\Z$ as a subgroup. This contradiction proves (1).
		\newline
		For (2), the proofs for $n=3,5$ and $7$ are similar, so we give the proof of $n = 7$ as an example. Since $X = M \# N$ is orientable, both $M$ and $N$ must be orientable and $H_i(M \# N, \Z) = H_i(M ; \Z) \oplus H_i(N, \Z)$ for all $i \neq 0,7$. Assume first that $X$ is $C$-trivial. By the homological classification of $7$-dimensional orientable $C$-trivial manifolds given in Theorem \ref{7_manifold_orientable}, we have that $H_i(M;\Z) = H_i(N;\Z) = 0$ for $i = 1,3,5,6$ and $H_i(M;\Z)$ and $H_i(N;\Z)$ are finite abelian groups such that $H_i(M;\Z) \oplus H_i(N;\Z) = F$ for $i = 2,4$. Since $M$ and $N$ are orientable, using Poincar\'e duality we get that $H_2(M;\Z) = H_4(M;\Z)$ and similarly for $N$. This means that $M$ and $N$ are $7$-dimensional manifolds that have the form given in Theorem \ref{7_manifold_orientable} and therefore are $C$-trivial. The converse is immediate. This proves statement (2) for $n=7$.
		\newline
		Next we prove (3) for the $n=5$ case. Let $X$ be a $C$-trivial non-orientable $5$-manifold. Since $X$ is non-orientable at least one among $M$ and $N$ must be non-orientable. By (1), both cannot be non-orientable and so exactly one among $M$ and $N$ is non-orientable. Without loss of generality let $M$ be non-orientable. By using the homological classification of $5$-dimensional of $5$-dimensional non-orientable $C$-trivial manifolds given in Theorem \ref{5-Manifold-Non-Orientable}, we observe the following:
		\begin{itemize}
			\item[(i)] $H_3(M;\Z) \oplus H_3(N;\Z) = H_3(X;\Z) = 0$ and hence $H_3(M;\Z) = H_3(N;\Z) = 0$.
			\item[(ii)] $H_4(M;\Z) \oplus H_4(N;\Z) = H_4(X;\Z) = \Z_2$ and using the fact that $M$ is orientable, we must have $H_4(M;\Z) = 0$ and $H_4(N;\Z) = \Z_2$.
			\item[(iii)] Using (i) and (ii), $H^4(M;\Z) = 0$ and then using Poincar\'e duality, we must have $H_1(M;\Z) = 0$. Since $H_1(M;\Z) \oplus H_1(N;\Z) = H_1(X;\Z) = \Z$, $H_1(N;\Z) = \Z$.
		\end{itemize}
		Combining the above observations and using the Theorems \ref{5-Manifold-Orientable} and \ref{5-Manifold-Non-Orientable} it is clear that $M$ and $N$ are $C$-trivial. The converse follows by similar arguments as above, completing the proof of the theorem.
	\end{proof}
	\begin{Rem} Note that for $n=6$, it is possible that $M\# N$ is $C$-trivial though neither $M$ nor $N$ is $C$-trivial. For example, if $M$ is the $6$-sphere and $N$ is any $6$-dimensional non-orientable $C$-trivial manifold, then $M \# N$ is $C$-trivial as it has the same homology as $N$ even though $M$ is not $C$-trivial.
	\end{Rem}

	\section*{Acknowledgment}
	The authors thank Dr. Aniruddha Naolekar at the Indian Statistical Institute, Bangalore, for suggesting the problem and for many fruitful discussions.
	
	\section*{Declarations}
	
	\subsection*{Data availability} We do not analyse or generate any datasets, because our work proceeds
	within a theoretical and mathematical approach.
	
	\subsection*{Conflict of interest} On behalf of both authors, the corresponding author states that there is
	no conflict of interest.


\begin{thebibliography}{99}
		\bibitem{atiyah} Atiyah, M., Hirzebruch, F., {\em Bott periodicity and the parallelizability of the spheres},  Proc. Cambridge Philos. Soc., 57 (1961)  223-226. 
		\bibitem{Banica_1} Banica, C., Putinar, M., \textit{On the classification of complex vector bundles of stable rank}, Proc. Indian Acad. Sci. (Math. Sci.), Vol. 116, No. 3, August 2006, 271-291.
		\bibitem{Banica_2} Banica, C., Putinar, M., \textit{On complex vector bundles on projective threefolds}, Inventiones mathematicae 88 (1987), 427-438.
		\bibitem{Buhstaber} Buhstaber, V. M., \textit{Modules of differentials of the Atiyah-Hirzebruch spectral sequence}, Math. Sb 78 (1969), 307-320.
		\bibitem{burton} Burton, B., {\em Structures of small closed non-orientable $3$-manifold triangulations}, vol 16, no. 5 (2007) page 545-574. 
		\bibitem{bhatta} Bhattacharya, A. C., Kundu B., and Naolekar, A. C., {\em $W$-triviality of low dimensional manifolds}, Manuscripta Math., vol 175, pages 499-512 (2024).
		\bibitem{SS_book} Tangora, M.C., Mosher, R.E., {\em Cohomology Operations and Applications to Homotopy Theory}, Harper and Row Publishers, New York 1968.  
		\bibitem{hus} Husemoller, D., {\em Fibre Bundles}, Springer-Verlag, New York 1966. 
		\bibitem{anistunted} Naolekar, A. C., and Thakur, A. S., {\em Vector bundles over iterated suspensions of stunted real projective spaces.}, Acta Math. Hungar., 142 (2014), 339-347.
		\bibitem{chern} Naolekar, A. C., and Thakur, A. S., {\em On trivialities of Chern classes}, Acta Math. Hungar. 144(1) (2014), pp. 99-109.
		\bibitem{podder} Podder, S., and Sau, G., {\em The $\mathcal{P}$-triviality of stunted projective spaces}, https://arxiv.org/pdf/2409.17102. 
		\bibitem{ruberman} Ruberman, D., {\em Null-homotopic embedded spheres of codimension one}, Tight and Taut submanifolds, MSRI Pubications, vol. 32, 1997 pp.   229-232.
		\bibitem{griffiths} Griffiths, P., {\em Topics in Algebraic and Analytic Geometry}, Princeton University Press, 1974.
		\bibitem{tanaka} Tanaka, R., {\em On trivialities of Stiefel-Whitney classes of vector bundles over highly connected complexes}, Top. and App., 155 (2008),  1687-1693.
		\bibitem{tanaka1} Tanaka, R., {\em On trivialities of Stiefel-Whitney classes of vector bundles over iterated suspension spaces},  Homology Homotopy Appl., 12 (2010), no. 1, 357-366.
		\bibitem{tanaka2} Tanaka, R., {\em On the index and co-index of sphere bundle}, Kyushu J. Math., 57 (2003),  371-382.
		\bibitem{tanaka3} Tanaka, R., {\em A Borsuk-ulam type theorem over iterated suspensions of real projective spaces}, J. Korean Math. Soc., 49 (2012),  251-263.
		\bibitem{ajaydold} Thakur, A. S., {\em On trivialities of Stiefel-Whitney classes of vector bundles over iterated suspensions of Dold manifolds},  Homology Homotopy Appl., 15 (2013), 223-233.
		\bibitem{MilnorStasheff} Milnor, J.W., Stasheff, J.D., {\em Characteristic Classes}, Princeton University Press (1974).
		
	\end{thebibliography}
\end{document}